\DeclareMathOperator{\Real}{Re} \DeclareMathOperator{\Imag}{Im}
\DeclareMathOperator{\diam}{diam}
\DeclareMathOperator{\dist}{dist}
\newcommand{\field}[1]{\mathbb{#1}}
\newcommand{\N}{\field{N}}                      % Naturals
\newcommand{\R}{\field{R}}                      % Reals
\newcommand{\C}{\field{C}}                      % Complex
\newcommand{\eps}{\epsilon}
\newcommand{\loc}{{\scriptstyle{loc}}}
\newcommand{\RH}{{\scriptstyle{RH}}}
\newcommand{\Sob}{{\scriptstyle{Sob}}}
\newcommand{\bA}{{\mathbf A}}
\newcommand{\cC}{{\mathcal C}}
\newcommand{\cF}{{\mathcal F}}
\newcommand{\cU}{{\mathcal U}}
\newcommand{\bv}{{\mathbf v}}
\newcommand{\cW}{{\mathcal W}}
\newcommand{\ovdimB}{{\overline{\dim_B}}}
\newcommand{\bi}{{\mathbf i}}
\def\Barint_#1{\mathchoice
          {\mathop{\vrule width 6pt height 3 pt depth -2.5pt
                  \kern -8pt \intop}\nolimits_{#1}}%
          {\mathop{\vrule width 5pt height 3 pt depth -2.6pt
                  \kern -6pt \intop}\nolimits_{#1}}%
          {\mathop{\vrule width 5pt height 3 pt depth -2.6pt
                  \kern -6pt \intop}\nolimits_{#1}}%
          {\mathop{\vrule width 5pt height 3 pt depth -2.6pt
                  \kern -6pt \intop}\nolimits_{#1}}}
\theoremstyle{plain}
\newtheorem{theorem}{Theorem}
\newtheorem{corollary}[theorem]{Corollary}
\newtheorem{lemma}[theorem]{Lemma}
\newtheorem{proposition}[theorem]{Proposition}
\theoremstyle{definition}
\newtheorem{definition}[theorem]{Definition}
\newtheorem{example}[theorem]{Example}
\newtheorem{remark}[theorem]{Remark}
\newtheorem{question}[theorem]{Question}
\newtheorem{conjecture}[theorem]{Conjecture}
\numberwithin{theorem}{section} \numberwithin{equation}{section}
\title[QC distortion of the Assouad spectrum and polynomial spirals]{Quasiconformal distortion of the Assouad spectrum and classification of polynomial spirals}
\date{\today}
\author{Efstathios K. Chrontsios Garitsis}
\address{Department of Mathematics \\ University of Illinois at Urbana-Champaign \\ 1409 West Green Street \\ Urbana, IL 61801}
\email{echronts@gmail.com}
\author{Jeremy T. Tyson}
\address{Department of Mathematics \\ University of Illinois at Urbana-Champaign \\ 1409 West Green Street \\ Urbana, IL 61801 and National Science Foundation \\ 2415 Eisenhower Avenue \\ Alexandria, VA 22314}
\email{tyson@illinois.edu}
\begin{document}
\maketitle

\begin{center}
\it Dedicated to Frederick W. Gehring (1925--2012) and Jussi V\"ais\"al\"a
\end{center}

\begin{abstract}
We investigate the distortion of Assouad dimension and the Assouad spectrum under Euclidean quasiconformal maps. Our results complement existing conclusions for Hausdorff and box-counting dimension due to Gehring--V\"ais\"al\"a\ and others. As an application, we classify polynomial spirals $S_a:=\{x^{-a}e^{\bi x}:x>0\}$ up to quasiconformal equivalence, up to the level of the dilatation. Specifically, for $a>b>0$ we show that there exists a quasiconformal map $f$ of $\C$ with dilatation $K_f$ and $f(S_a)=S_b$ if and only if $K_f \ge \tfrac{a}{b}$.
\end{abstract}

\section{Introduction}

Quasiconformal mappings can distort dimensions of sets. Gehring and V\"ais\"al\"a \cite{GV} gave dilatation-dependent bounds for Euclidean quasiconformal distortion of Hausdorff dimension. The precise formulation of their bounds involves the sharp exponent of higher Sobolev integrability for an $n$-dimensional quasiconformal map, whose precise value remains conjectural in dimension at least three. In dimension two, explicit, sharp results follow from Astala's theorem \cite{Astala}.

A dilatation-independent study of quasiconformal dimension distortion was initiated in the late 1990s, see e.g.\ \cite{bis:increase}, \cite{tys:hausdorff}, \cite{kov:confdim}, \cite{tw:gasket}. While the results of Gehring and V\"ais\"al\"a concern the distortion of dimensions of arbitrary subsets by a fixed quasiconformal mapping, these later results concern distortion of dimension for a fixed subset of $\R^n$ by arbitrary quasiconformal maps.

A more recent line of research (see, for instance, \cite{bmt:frequency}, \cite{bhw:quasiconformal}, or \cite{btw:parallel-lines}) addresses the question of simultaneous distortion of dimensions of large families of parallel subspaces, or generic elements in other parameterized families of subsets.

In all of the preceding discussion, the concept of dimension under consideration is the Hausdorff dimension. Hausdorff dimension is one of the most well-studied metric notions of dimension, and numerous tools exist for its computation and estimation. Some of the preceding theory extends to other notions of dimension, such as box-counting or packing dimension. For example, the distortion bounds in Gehring and V\"ais\"al\"a's original paper hold also for both box-counting dimension and packing dimension, as they rely only on higher Sobolev regularity. See Kaufman, \cite{Kaufman}, for a discussion of the distortion of Hausdorff and box-counting dimension by supercritical Sobolev maps.

In this paper we establish dilatation-dependent estimates for the distortion of Assouad dimension and the Assouad spectrum by quasiconformal maps. This paper can be seen as a companion to \cite{tys:assouad}, which considered conformal Assouad dimension of sets and metric spaces. We improve and sharpen some dilatation-dependent estimates from \cite{tys:assouad}, and we initiate a study of quasiconformal distortion of the recently defined Assouad spectrum.

As an application, we provide a precise classification of planar polynomial spirals up to quasiconformal equivalence which is sharp on the level of dilatation. This result indicates the relevance of the Assouad spectrum for classification problems in geometric mapping theory.

For a strictly decreasing function $\phi:(1,\infty) \to (0,1)$ with $\phi(x) \stackrel{x\to\infty}{\longrightarrow} 0$, define the {\it $\phi$-spiral}
$$
S(\phi) := \{ \phi(x) e^{\bi x} \in \C : x>0 \}.
$$
When $\phi(x) = e^{-cx}$, $c>0$, we have the {\it logarithmic spiral}. Here we consider polynomial spirals. For $a>0$ set $S_a := S(\phi_a)$ where
$$
\phi_a(x) = x^{-a}.
$$

The following result is an application of our main theorem.

\begin{theorem}\label{th:spirals-classification}
For $a>b>0$, there exists a quasiconformal map $f:\C \to \C$ with $f(S_a) = S_b$ if and only if $K_f \ge \tfrac{a}{b}$.
\end{theorem}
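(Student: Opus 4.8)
\emph{Overall strategy.} I would prove the two implications separately: ``if'' by exhibiting an explicit extremal map, and ``only if'' by combining the Assouad spectrum of polynomial spirals with the dilatation-dependent distortion estimate that is the main theorem of the paper.

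\emph{The ``if'' direction.} Fix $a>b>0$ and put $\beta:=b/a\in(0,1)$. Consider the radial stretch $f_0(z):=|z|^{\beta-1}z$, i.e.\ $f_0(re^{i\phi})=r^{\beta}e^{i\phi}$. This is a homeomorphism of $\C$ fixing $0$ and $\infty$, real-analytic off the origin, with Beltrami coefficient $\mu_{f_0}(z)=\tfrac{\beta-1}{\beta+1}\,\tfrac{z}{\bar z}$; thus $|\mu_{f_0}|\equiv\tfrac{1-\beta}{1+\beta}$ is constant, so $f_0$ is quasiconformal with $K_{f_0}=\tfrac{1+|\mu_{f_0}|}{1-|\mu_{f_0}|}=\beta^{-1}=a/b$. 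Since $f_0(x^{-a}e^{\bi x})=(x^{-a})^{\beta}e^{\bi x}=x^{-a\beta}e^{\bi x}=x^{-b}e^{\bi x}$ for every $x>0$, we have $f_0(S_a)=S_b$. As $K_{f_0}=a/b$, the single map $f_0$ witnesses the assertion for every prescribed $K_f\ge a/b$.

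\emph{The ``only if'' direction.} Suppose $f\colon\C\to\C$ is quasiconformal with $f(S_a)=S_b$; we must show $K_f\ge a/b$. The input is the Assouad spectrum of the polynomial spiral, which I would first record (from the literature or by a direct covering computation): for each $c>0$ the function $\theta\mapsto\dim_A^{\theta}S_c$ rises from the box value $\dim_B S_c$ to the ambient value $2$ and reaches $2$ precisely at $\theta^{*}(S_c)=\tfrac{c}{1+c}$; explicitly $\dim_A^{\theta}S_c=\min\{\tfrac{2}{(1+c)(1-\theta)},2\}$ for $0<c\le1$, while for $c>1$ the spiral is rectifiable, $\dim_B S_c=1$, and $\dim_A^{\theta}S_c=\min\{1+\tfrac{\theta}{c(1-\theta)},2\}$ sits strictly below the ``maximal'' profile $\min\{2,\dim_B S_c/(1-\theta)\}$. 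The decisive feature is that the transition parameter, in the form $\tfrac{\theta^{*}(S_c)}{1-\theta^{*}(S_c)}=c$, records the exponent $c$ itself; in particular the Assouad spectrum (unlike box or Assouad dimension, which do not separate $S_a$ from $S_b$ when $a,b>1$) distinguishes all the spirals. Since $b<a$, the spectrum of $S_b$ strictly dominates that of $S_a$ for small $\theta$, so $f$ genuinely enlarges $\dim_A^{\theta}$ near the origin, and the main theorem bounds this: a $K_f$-quasiconformal planar map controls both the enlargement of the dimension value (a sharp, Astala-type area-distortion estimate) and the displacement of the transition parameter under the $f$-distortion of scales. Substituting the spiral formulas and letting $\theta\uparrow\theta^{*}(S_b)$, where the constraint is tightest, one is led to $\tfrac{\theta^{*}(S_a)}{1-\theta^{*}(S_a)}\le K_f\cdot\tfrac{\theta^{*}(S_b)}{1-\theta^{*}(S_b)}$, i.e.\ $a\le K_f\,b$, which is $K_f\ge a/b$. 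That $a/b$ is the exact threshold, not merely a lower bound, is certified by $f_0$ above, which saturates the distortion inequality scale-by-scale near the origin.

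\emph{Main obstacle.} Two points carry the weight. First, one must determine the Assouad spectrum of $S_a$ exactly on all of $(0,1)$: the upper bound comes from covering the near-origin loops, which at scale $r$ have merged into an essentially $r$-dense disk of radius $\asymp r^{a/(1+a)}$, together with the still-resolved outer loops, but establishing the matching lower bound and, above all, the precise transition $\theta^{*}=\tfrac{a}{1+a}$ (and, for $a>1$, the non-maximal shape of the spectrum) is where the exponent $a$ is extracted. Second, the distortion estimate drawn from the main theorem must be the sharp planar one — a soft H\"older-continuity bound would give only a weaker inequality for $K_f$ — and one must verify that $f_0$ is genuinely extremal for it, so that the $K_f$-realizability upper bound coming from $f_0$ and the $K_f$ lower bound coming from spectrum distortion meet exactly at $a/b$. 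Making these two ends meet is the technical heart of the argument.
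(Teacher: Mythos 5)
Your proposal is correct and follows essentially the same route as the paper: the radial stretch $|z|^{b/a-1}z$ for the ``if'' direction, and for the ``only if'' direction Fraser's formulas for $\dim_A^\theta(S_c)$ together with the $K$-shift of the spectrum parameter in Corollary \ref{cor:main2}, evaluated at the phase transition $\theta^*(S_c)=c/(1+c)$, yielding $a\le K_f\,b$. The only (harmless) difference is that you lean on the sharp Astala-type distortion of the dimension \emph{values} and on extremality of the radial stretch, whereas the paper's argument needs neither: it only uses that $\dim_{A,reg}^{\theta(t)}(S_b)=2$ at $t=1/b$ forces $\dim_{A,reg}^{\theta(t/K)}(S_a)=2$, i.e.\ only the location of the threshold matters, as the paper notes in the remark following its proof.
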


One direction is trivial. The radial stretch map $f(z) = |z|^{1/K-1}z$ is $K$-quasiconformal, and maps $S_a$ to $S_b$ with $b=a/K$. The content of the theorem is the other implication, namely, if $K<\tfrac{a}{b}$, then no $K$-quasiconformal map of $\C$ satisfies $f(S_a) = S_b$. Distinguishing sets up to quasiconformal equivalence, particularly at the level of the dilatation, is in general a hard problem. One approach is to use dimension distortion bounds. Theorem \ref{th:spirals-classification} is the first application of the Assouad spectrum to such a question, and motivates the study of these notions of dimension in connection with geometric mapping theory.

We begin the discussion of quasiconformal dimension distortion estimates by recalling the celebrated estimates of Gehring and V\"ais\"al\"a from \cite{GV}. Let $f:\Omega \to \Omega'$ be a $K$-quasiconformal mapping between domains in Euclidean space $\R^n$, $n \ge 2$. Here and throughout this paper we adopt the analytic definition for quasiconformality, expressed in terms of the outer dilatation $K_O(f)$ of $f$, and we denote by $\Barint_E w$ the average of a locally integrable function $w:\R^n \to \R$ over a set $E$ of positive measure, namely, $\Barint_E w = |E|^{-1} \int_E w$. See subsection \ref{subsec:qc-mappings} for details. According to Gehring's well-known higher integrability theorem \cite[Theorem 1]{Gehring73}, $f$ lies in a local Sobolev space $W^{1,p}_\loc(\Omega:\R^n)$ for some $p>n$. In particular, the differential of $f$ satisfies a reverse H\"older inequality, to wit,
\begin{equation}\label{eq:pnK0}
\left( \Barint_Q |Df|^p \right)^{1/p} \le C \left( \Barint_{Q} |Df|^n \right)^{1/n} 
 \end{equation}
for all cubes $Q \subset \Omega$ with $\diam(Q) < \dist(Q,\partial\Omega)$ and $\diam(f(Q)) < \dist(f(Q),\partial\Omega')$. Here $C>0$ denotes a constant which depends only on the dimension $n$ and the dilatation $K$ of the mapping $f$.
 
For the purposes of this paper, we will use a weaker version of \eqref{eq:pnK0}. Namely, throughout the paper we will refer to the following reverse H\"older inequality:
\begin{equation}\label{eq:pnK}
		\left( \Barint_Q |Df|^p \right)^{1/p} \le C \left( \Barint_{2Q} |Df|^n \right)^{1/n}
\end{equation}
which is assumed to hold for a fixed constant $C>0$ and for all cubes $Q \subset \Omega$ with $\diam(Q) < \dist(Q,\partial\Omega)$ and $\diam(f(2Q)) < \dist(f(2Q),\partial\Omega')$. Here we denote by $2Q$ a cube concentric with $Q$, whose side length equals twice that of $Q$. Note that \eqref{eq:pnK0} implies \eqref{eq:pnK} with constant $C_{(1.2)} = 2C_{(1.1)}$. We refer the reader to \cite[Section 14.4]{im:gft} for additional information on the two versions of a reverse H\"older inequality for quasiconformal mappings.

For $n \ge 2$ and $K \ge 1$, denote by
$$
p_O^{\RH}(n,K)
$$
the supremum of those values $p>n$ so that \eqref{eq:pnK} holds true for every quasiconformal map $f$ between domains in $\R^n$ with $K_O(f) \le K$. We include the superscript in the notation for this higher integrability exponent to indicate the role of the reverse H\"older inequality in the definition. We also denote by
$$
p_O^{\Sob}(n,K)
$$
the supremum of those values $p>n$ so that every quasiconformal map $f$ between domains $\Omega$ and $\Omega'$ in $\R^n$ with $K_O(f) \le K$ necessarily lies in $W^{1,p}_\loc(\Omega:\R^n)$. The value $p_O^{\Sob}(n,K)$ is what is traditionally referred to as the higher integrability exponent for quasiconformal mappings in the literature, where it is generally denoted $p(n,K)$. Note that we always have
\begin{equation}\label{eq:pRHSobnK}
p_O^{\RH}(n,K) \le p_O^{\Sob}(n,K)
\end{equation}
and moreover,
\begin{equation}\label{eq:pRHSobnK2}
p_O^{\Sob}(n,K) \le \frac{nK}{K-1}
\end{equation}
as witnessed by the radial stretch map $f(x) = |x|^{1/K-1}x$. In this paper, the distinction between the (potentially different) values of the higher integrability exponent, as defined in terms of either reverse H\"older inequalities or Sobolev regularity, is important in the statements and proofs of our main results. However, conjecturally, equality holds in both \eqref{eq:pRHSobnK} and \eqref{eq:pRHSobnK2} for all $n \ge 3$ and $K \ge 1$.

In a similar fashion, let $p_I^{\RH}(n,K)$, respectively $p_I^{\Sob}(n,K)$, denote the supremum of values so that \eqref{eq:pnK} holds true, respectively $f$ lies in $W^{1,p}_\loc(\Omega:\R^n)$, for every quasiconformal map $f$ between domains $\Omega$ and $\Omega'$ in $\R^n$ with $K_I(f) \le K$. Here $K_I(f)$ denotes the inner dilatation of $f$. Recall that $K_I(f) \le K_O(f)^{n-1}$ and $K_O(f) \le K_I(f)^{n-1}$ in any dimension. In particular, when $n=2$ the outer and inner dilatations coincide ($K_O(f)=K_I(f)=:K_f$) and the higher integrability exponents $p^{\RH}(2,K)$ and $p^{\Sob}(2,K)$ are defined without reference to either the outer or inner dilatation. Astala \cite{Astala} showed that
\begin{equation}\label{eq:p2K}
p^{\Sob}(2,K) = \frac{2K}{K-1}
\end{equation}
and the stronger statement
\begin{equation}\label{eq:p2K2}
p^{\RH}(2,K) = \frac{2K}{K-1}
\end{equation}
can be found in \cite[Theorem 13.2.3 and Corollary 13.2.4]{aim:2qc}.\footnote{Note, however, that the identity in \eqref{eq:p2K2} was surely already known to Astala, see e.g.\ \cite[Corollary 3.4]{Astala}.}

Returning to the results of \cite{GV}, let $E$ be a subset of $\Omega$ with $\dim_H(E) = \alpha \in (0,n)$. Here and henceforth we denote by $\dim_H(E)$ the Hausdorff dimension of a set $E$. Then
\begin{equation}\label{eq:gv}
0< \frac{(p_O^{\Sob}(n,K^{n-1})-n)\alpha}{p_O^{\Sob}(n,K^{n-1})-\alpha} \le \dim_H f(E) \le \frac{p_O^{\Sob}(n,K)\alpha}{p_O^{\Sob}(n,K) - n + \alpha} < n
\end{equation}
for any $K$-quasiconformal map $f:\Omega \to \R^n$. In particular, quasiconformal maps in $\R^n$ preserve the dimension of sets of Hausdorff dimension $0$ or $n$. Note that the lower bound in \eqref{eq:gv} is obtained by applying the upper bound proved in \cite{GV} to the inverse map $g = f^{-1}$, since $g$ is again quasiconformal with $K_O(g)=K_I(f)\le K_O(f)^{n-1}\leq K^{n-1}$. The two-sided estimates in \eqref{eq:gv} are sometimes written in the `symmetric' form
\begin{equation}\label{eq:gv2} \small
\left( 1 - \frac{n}{p_O^{\Sob}(n,K)} \right) \left( \frac1{\dim_H E} - \frac1n \right) \le \frac1{\dim_H f(E)} - \frac1n \le \left( 1 - \frac{n}{p_O^{\Sob}(n,K^{n-1})} \right)^{-1} \left( \frac1{\dim_H E} - \frac1n \right),
\end{equation}
illustrating the role of the local H\"older exponent $1-n/p$ for $W^{1,p}_\loc(\Omega:\R^n)$ mappings in dimension $n$. In particular, when $n=2$ we have
\begin{equation}\label{eq:astala2}
\frac1K \, \left( \frac1{\dim_H E} - \frac12 \right) \le \frac1{\dim_H f(E)} - \frac12 \le K \, \left( \frac1{\dim_H E} - \frac12 \right),
\end{equation}
for $E \subset \R^2$ and $f$ a quasiconformal map of $\R^2$, as observed by Astala \cite{Astala}.

A panoply of metrically defined notions of dimension have been introduced to elucidate disparate features of sets and metric spaces. These include, for instance, box-counting (Minkowski) dimension and its countably stable regularization, packing dimension, as well as Assouad dimension. We denote by $\ovdimB(E)$ the upper box-counting dimension of a bounded set $E \subset \R^n$ and by $\dim_A(E)$ the Assouad dimension of an arbitrary set $E \subset \R^n$. We refer to section \ref{sec:background} for the full definitions of these and other notions of dimension, but for later purposes in this introduction we remind the reader that
$$
\dim_A(E) = \inf \{ s>0 \, : \, \mbox{$E$ is $s$-homogeneous} \},
$$
where a set $E$ is said to be $s$-homogeneous (with $s$-homogeneity constant $C$) if the number $N(B(x,R)\cap E,r)$ of small sets of diameter at most $r$ needed to cover a large ball $B(x,R) \cap E$ is bounded above by $C(R/r)^s$, uniformly for $0<r\le R$ and $x \in E$. We recall that
$$
\dim_H(E) \le \ovdimB(E) \le \dim_A(E)
$$
for all bounded $E$, and
$$
\dim_H(E) \le \dim_A(E)
$$
for all $E$. The Assouad dimension is unchanged upon taking the closure of a set (Proposition \ref{prop:Assouad-spectrum-facts}(1)(c)); this fact allows us to focus on all sets instead of closed sets and on bounded sets instead of compact sets. Assouad dimension was introduced in connection with the existence (and non-existence) of bi-Lipschitz embeddings into Euclidean spaces. The past two decades have witnessed the increased role of Assouad dimension in quasisymmetric uniformization questions, with particular emphasis on quasisymmetric uniformization of metric $2$-spheres. There is also substantial interest in Assouad dimension for its own sake, as a tool for the study of the metric geometry of Euclidean sets and sets in more general metric spaces. We refer the reader to \cite{Fraser2020} for a comprehensive study of Assouad dimension from the perspective of fractal geometry.

In \cite{tys:assouad}, motivated by some (at that time unresolved) questions regarding dilatation-independent distortion of Hausdorff dimension by  quasiconformal maps, the second author studied analogous questions for Assouad dimension. While the primary focus of \cite{tys:assouad} was on dilatation-independent results, the following dilatation-dependent analog for \eqref{eq:gv} was included. Let $f:\R^n \to \R^n$ be a $K$-quasiconformal map ($n \ge 2$) and let $E \subset \R^n$ satisfy $\dim_A(E) = \alpha \in (0,n)$. Then
\begin{equation}\label{eq:assouad-first}
0<\beta_- \le \dim_A f(E) \le \beta_+ < n
\end{equation}
where the constants $\beta_\pm$ depend only on $n$, $K$, $\alpha$, and (in the case of $\beta_+$) an $s$-homogeneity constant for the set $E$ for some exponent $s$, $\alpha < s < n$. The proof for this result was quite different from the classical proof by Gehring and V\"ais\"al\"a; the lower bound was derived using the fact that Euclidean quasiconformal mappings are power quasisymmetric, and the upper bound relied on the connection between Assouad dimension and porosity and the quasiconformal invariance of porosity. No explicit bounds for $\beta_\pm$ were given, and it was left as an open question whether the stated dependence of $\beta_+$ on the auxiliary homogeneity constant was necessary.

Our first main result (Theorem \ref{th:main1}) addresses both of the above issues. We give precise estimates for the upper and lower bounds in \eqref{eq:assouad-first} and we show that the upper bound can be chosen independent of any auxiliary homogeneity data.

\begin{theorem}\label{th:main1}
Let $f:\Omega \to \Omega'$ be a $K$-quasiconformal map between domains in $\R^n$, $n \ge 2$. Let $E \subset \Omega$ be a compact set satisfying $0<\dim_A(E)<n$. Then
\begin{equation}\label{eq:main1}\small
\left( 1 - \frac{n}{p_O^{\RH}(n,K)} \right) \left( \frac1{\dim_A E} - \frac1n \right) \le \frac1{\dim_A f(E)} - \frac1n \le \left( 1 - \frac{n}{p_O^{\RH}(n,K^{n-1})} \right)^{-1} \left( \frac1{\dim_A E} - \frac1n \right).
\end{equation}\normalsize
If $\Omega = \Omega' = \R^n$ then the conclusion holds for all subsets $E$ (not necessarily compact).
\end{theorem}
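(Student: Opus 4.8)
\emph{Reduction.} Write $\alpha=\dim_A E$. The plan is to derive both halves of \eqref{eq:main1} from the single implication: \emph{if $h:\Omega\to\Omega'$ is $K$-quasiconformal between domains of $\R^n$ and $E\subset\Omega$ is compact, then}
\begin{equation}\label{eq:plan-upper}
\dim_A h(E)\le\frac{sp}{p-n+s}\qquad\text{for all }p<p_O^{\RH}(n,K)\text{ and all }s>\dim_A E.
\end{equation}
Letting $s\downarrow\alpha$ and $p\uparrow p_O^{\RH}(n,K)$ in \eqref{eq:plan-upper} and using the elementary identity $\tfrac1\beta-\tfrac1n=(1-\tfrac np)(\tfrac1\alpha-\tfrac1n)$ for $\beta=\tfrac{\alpha p}{p-n+\alpha}$ yields the left inequality of \eqref{eq:main1}. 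For the right inequality, apply \eqref{eq:plan-upper} to $g=f^{-1}$, which is quasiconformal with $K_O(g)=K_I(f)\le K^{n-1}$, and to the compact set $f(E)$; this gives $\dim_A E=\dim_A g(f(E))\le\tfrac{\dim_A f(E)\,p'}{p'-n+\dim_A f(E)}$ for all $p'<p_O^{\RH}(n,K^{n-1})$, and solving this inequality for $\dim_A f(E)$ and letting $p'\uparrow p_O^{\RH}(n,K^{n-1})$ produces exactly the right inequality. (The bounds $0<\dim_A f(E)<n$ needed to run this for $g$ are themselves consequences of \eqref{eq:plan-upper}.) When $\Omega=\Omega'=\R^n$ the argument below works for arbitrary $E$, since then $f$ is globally quasisymmetric and boundary constraints disappear.

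\emph{The covering argument.} Fix $p<p_O^{\RH}(n,K)$ and $s>\alpha$, so $E$ is $s$-homogeneous with constant $C_s$, and note $p<p_O^{\Sob}(n,K)$ by \eqref{eq:pRHSobnK}, so $h\in W^{1,p}_\loc$ and Morrey's inequality holds on balls compactly contained in $\Omega$. Set $F=h(E)$, fix $y_0=h(x_0)\in F$ and $0<r'\le R'$; we must show $N(B(y_0,R')\cap F,r')\lesssim_s(R'/r')^{sp/(p-n+s)}$ (for $R'$ small; for $R'$ bounded below, compactness of $F$ makes this trivial). Using quasisymmetry of $h$, choose $\hat R>0$ with a large fixed dilate of $B(x_0,\hat R)$ compactly inside $\Omega$, $B(y_0,R')\cap F\subset h(B(x_0,\hat R)\cap E)$, and $\diam h(B(x_0,C\hat R))\lesssim R'$. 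Let $\{h(z_j)\}_j$ be a maximal $(r'/2)$-separated subset of $B(y_0,R')\cap F$, so $N(B(y_0,R')\cap F,r')\le\#\{z_j\}$ and $z_j\in B(x_0,\hat R)\cap E$. To each $z_j$ attach the position-dependent scale $\tau_j:=\sup\{\tau\le\hat R:\diam h(B(z_j,\tau))\le r'/20\}$; continuity of $h$ gives $\diam h(B(z_j,\tau_j))\le r'/20$, so the separation of the $h(z_j)$ forces $|z_i-z_j|\ge\max(\tau_i,\tau_j)$ for $i\ne j$. Group the indices into $\mathcal J_l:=\{j:2^{-l-1}\hat R<\tau_j\le 2^{-l}\hat R\}$, $l\ge1$ (the finitely many remaining $z_j$, with $\tau_j>\hat R/2$, being negligible).

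\emph{The two scale-by-scale bounds.} First, $\{z_j:j\in\mathcal J_l\}$ is a $(2^{-l-1}\hat R)$-separated subset of $E\cap B(x_0,\hat R)$, so $s$-homogeneity of $E$ gives $\#\mathcal J_l\le C_s\,2^{(l+2)s}$. Second, for $j\in\mathcal J_l$ we have $2^{1-l}\hat R>\tau_j$, hence $\diam h(B(z_j,2^{1-l}\hat R))>r'/20$, and Morrey's inequality on that ball gives $\int_{B(z_j,2^{1-l}\hat R)}|Dh|^p\gtrsim (r')^p(2^{-l}\hat R)^{n-p}=:\mu_l$; the balls $B(z_j,2^{1-l}\hat R)$, $j\in\mathcal J_l$, have bounded overlap and lie in $B(x_0,C\hat R)$, so $(\#\mathcal J_l)\mu_l\lesssim\int_{B(x_0,C\hat R)}|Dh|^p$, while \eqref{eq:pnK} together with $|Dh|^n\le K_O(h)|J_h|$ and $\int_{B(x_0,2C\hat R)}|J_h|=|h(B(x_0,2C\hat R))|\lesssim(R')^n$ bounds $\int_{B(x_0,C\hat R)}|Dh|^p\lesssim (R')^p\hat R^{n-p}$; hence $\#\mathcal J_l\lesssim (R'/r')^p\,2^{-l(p-n)}$. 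Combining the two,
\begin{equation*}
N(B(y_0,R')\cap F,r')\ \lesssim\ \sum_{l\ge1}\min\bigl(C_s\,2^{ls},\,(R'/r')^p\,2^{-l(p-n)}\bigr),
\end{equation*}
and since $p>n$ the geometric series on each side of the crossover index $l_0\approx\tfrac{p}{p-n+s}\log_2(R'/r')$ is controlled by its value there, giving $N(B(y_0,R')\cap F,r')\lesssim_s(R'/r')^{sp/(p-n+s)}$, which is \eqref{eq:plan-upper}. Note $C_s$ enters only the implied constant, so the exponent — hence the bound on $\dim_A h(E)$ — carries no dependence on homogeneity data, which is the promised improvement on \cite{tys:assouad}.

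\emph{The main difficulty.} The crucial subtlety is sharpness of the exponent. A naive approach — covering $E\cap B(x_0,\hat R)$ at one scale, equivalently using only the H\"older-type distortion estimate $\diam h(Q')\lesssim(\diam Q'/\diam Q)^{1-n/p}\diam h(2Q)$ that follows from \eqref{eq:pnK} and Morrey — yields only $\dim_A h(E)\le\tfrac{\alpha p}{p-n}$, strictly weaker than (and, once $K\alpha\ge n$, vacuous compared with) the Gehring--V\"ais\"al\"a exponent $\tfrac{\alpha p}{p-n+\alpha}$. Recovering the extra summand $\alpha$ in the denominator is exactly what the balance above achieves, between the counting estimate from the homogeneity of $E$ and the counting estimate from the $L^p$-concentration of $Dh$ encoded in the reverse H\"older inequality; carrying out this balance uniformly over all dyadic scales, with the correctly chosen position-dependent scales $\tau_j$, is the heart of the matter. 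The remaining ingredients — the (only locally valid, whence the restriction to small $R'$ in general domains) transfer of scales between $\Omega$ and $\Omega'$ by quasisymmetry, and the passage to $f^{-1}$ via $K_I\le K_O^{n-1}$ — are standard.
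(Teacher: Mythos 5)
Your proposal is correct, and its analytic core coincides with the paper's: both reduce to the single one-sided bound $\dim_A h(E)\le sp/(p-n+s)$ for $p<p_O^{\RH}(n,K)$, obtain the other inequality by passing to $f^{-1}$ with $K_O(f^{-1})=K_I(f)\le K^{n-1}$, localize via the egg-yolk principle, and extract the sharp exponent by balancing a covering count coming from the $s$-homogeneity of $E$ against a count coming from the Morrey--Sobolev inequality combined with the reverse H\"older inequality \eqref{eq:pnK} (which is exactly why $p_O^{\RH}$ rather than $p_O^{\Sob}$ appears). Where you differ is in the combinatorial bookkeeping. The paper runs a Kaufman-style stopping time on the dyadic decomposition of $Q(x,R)$: for each target scale $r_m'=2^{-m\alpha/\beta}R'$ it declares cubes major or minor according to whether $\diam f(Q)$ exceeds $r_m'$, covers $f(E)\cap B(y,R')$ by images of level-$m$ minor cubes together with critical minor cubes sitting below major ones, and bounds the number of major cubes of all levels $\ge m$ by $C2^{m\alpha}$ (Lemma \ref{Le2}); the balance between the two counts is encoded in the choice of $r_m'$. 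You instead fix the target scale $r'$, take a maximal $(r'/2)$-separated net in the image, pull it back, attach to each point the stopping radius $\tau_j$ at which the image diameter reaches $r'/20$, and bound each dyadic population $\#\mathcal J_l$ by the minimum of the homogeneity count $C_s2^{ls}$ and the energy count $(R'/r')^p2^{-l(p-n)}$, summing the min over $l$ and reading off the exponent at the crossover. Your version buys a slightly more transparent appearance of the exponent $sp/(p-n+s)$ as a two-count crossover and avoids the major/minor/critical taxonomy; the paper's dyadic version transfers with almost no change to the Assouad spectrum (Theorem \ref{th:main2}), where one must track the constraint $r\le R^{1/\theta}$ through the scales, which is less immediate in the net formulation. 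Both correctly yield an exponent free of the homogeneity constant $C_s$, which enters only the multiplicative constant.
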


We do not know whether the estimates in \eqref{eq:main1} hold, for $n\ge 3$, with the roles of $p_O^{\RH}(n,K)$ and $p_O^{\RH}(n,K^{n-1})$ replaced by $p_O^{\Sob}(n,K)$ and $p_O^{\Sob}(n,K^{n-1})$. Assouad dimension is a global, scale-invariant measurement; in order to control its distortion under quasiconformal mappings we need to impose corresponding global and scale-invariant control on the higher integrability exponent. However, the conclusion in Theorem \ref{th:main1} does suffice to yield a quantitative conclusion regarding dimension distortion. See Corollary \ref{cor:main1} for details. Moreover, as previously observed, we conjecture that the exponents $p_O^{\RH}(n,K)$ and $p_O^{\Sob}(n,K)$ coincide for all relevant $n$ and $K$.

While the basic formulation of the upper and lower estimates in Theorem \ref{th:main1} aligns with that in the Gehring--V\"ais\"al\"a\ result (with Hausdorff dimension replaced by Assouad dimension), the proofs are quite different. In particular, the upper bound
\begin{equation}\label{eq:sob-bound}
\dim_H f(E) \le \frac{p\alpha}{p-n+\alpha}
\end{equation}
as in \eqref{eq:gv} holds for any set $E \subset \Omega \subset \R^n$ with $\dim_H(E) = \alpha$ and any $W^{1,p}_\loc(\Omega:\R^n)$ mapping $f$ (not necessarily quasiconformal, or even a homeomorphism). As previously mentioned, distortion of Hausdorff and box-counting (as well as packing) dimensions by Sobolev mappings was studied by Kaufman \cite{Kaufman}. Our proof of Theorem \ref{th:main1} explicitly uses the fact that $f$ is a quasiconformal homeomorphism, and not just its modulus of uniform continuity or membership in a suitable Sobolev space. Observe that Lipschitz mappings may increase Assouad dimension, see \cite[Example A.6.2]{Luukkainen} or \cite[Theorem 10.2.5]{Fraser2020} for examples.

Our second main theorem concerns the distortion of the Assouad spectrum by Euclidean quasiconformal maps. The Assouad spectrum, introduced by Fraser and Yu \cite{fy:assouad-spectrum}, is a one-parameter family of metrically defined dimensions which interpolates between the upper box-counting dimension and the (quasi-)Assouad dimension. Specifically, the Assouad spectrum of a set $E \subset \R^n$ is a collection of values
$$
\{\dim_A^\theta(E):0<\theta<1\},
$$
where $\dim_A^\theta(E)$ captures the growth rate of the covering number $N(B(x,R)\cap E,r)$ for scales $0<r\le R<1$ related by $R = r^\theta$. The map $\theta \mapsto \dim_A^\theta(E)$ is continuous (even locally Lipschitz) when $0<\theta<1$, and
$$
\dim_A^\theta(E) \to \ovdimB(E) \quad \mbox{as $\theta \to 0$}, \qquad \dim_A^\theta(E) \to \dim_{qA}(E) \quad \mbox{as $\theta \to 1$} \, .
$$
Here $\dim_{qA}(E)$ denotes the {\it quasi-Assouad dimension} of $E$, a variant of Assouad dimension introduced by L\"u and Xi [LX]. We always have $\dim_{qA}(E) \le \dim_A(E)$, and equality holds in many situations (see \cite[Section 3.3]{Fraser2020} for details).

In fact, we use a slightly modified version of the Assouad spectrum where the relationship $R=r^\theta$ between the two scales is relaxed to an inequality $R\ge r^\theta$. This modification leads to the notion of {\it upper Assouad spectrum}, denoted $\{\overline{\dim_A^\theta}(E):0<\theta<1\}$ in the literature. See \cite{fhhty:two} or \cite[Section 3.3.2]{Fraser2020} for more information. For a fixed $\theta \in (0,1)$ the key relationship between the respective two values in the two spectra (see Theorem 3.3.6 in \cite{Fraser2020}) is that
\begin{equation}\label{eq:regularization}
\overline{\dim_A^\theta}(E) = \sup_{0<\theta'<\theta} \dim_A^{\theta'}(E).
\end{equation}
In this paper, we propose the term {\it regularized Assouad spectrum} in lieu of upper Assouad spectrum, and use the notation $\dim_{A,reg}^\theta(E)$ in place of $\overline{\dim_A^\theta}(E)$.

For $t>0$ define
$$
\theta(t) = \frac1{1+t}.
$$

Note that the next two statements still hold formally when one of the denominators is equal to $0$, with the convention that $1/0=\infty$. For example, if $\dim_{A,reg}^{\theta(t/K)}(E)=0$, then Theorem 1.4 implies that $\dim_{A,reg}^{\theta(t)}(f(E))$ needs to be equal to $0$ as well.

\begin{theorem}\label{th:main2}
Let $f:\Omega\to\Omega'$ be a $K$-quasiconformal map as in Theorem \ref{th:main1}, For $t>0$ and a compact set $E$ contained in $\Omega$, we have
\begin{equation}\label{eq:main2}\begin{split}
\small
&\left( 1 - \frac{n}{p_O^{\RH}(n,K)} \right) \left( \frac1{\dim_{A,reg}^{\theta(t/K)}(E)} - \frac1n \right) \\
& \qquad \le \frac1{\dim_{A,reg}^{\theta(t)}(f(E))} - \frac1n \le 
\\
&\left( 1 - \frac{n}{p_O^{\RH}(n,K^{n-1})} \right)^{-1} \left( \frac1{\dim_{A,reg}^{\theta(Kt)}(E)} - \frac1n \right) \, .
\end{split}\end{equation}
\end{theorem}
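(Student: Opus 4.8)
The plan is to follow the Gehring--V\"ais\"al\"a/Kaufman covering scheme, but carried out \emph{locally} inside a ball and for a \emph{single} pair of scales belonging to an Assouad-spectrum window, and powered by the scale-invariant reverse H\"older inequality \eqref{eq:pnK} rather than by global membership in $W^{1,p}_\loc$. It suffices to prove the left-hand (upper) inequality in \eqref{eq:main2}, with both dilatations $K_O(f)$ and $K_I(f)$ tracked in the argument (the reverse H\"older inequality depending on the former, and the local H\"older exponent---hence the parameter shift---on their maximum): applying that inequality to $g=f^{-1}$, which satisfies $K_O(g)=K_I(f)\le K^{n-1}$ and $K_I(g)=K_O(f)\le K$, then rearranging and substituting $Kt$ for the free parameter, produces the right-hand (lower) inequality. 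Throughout one may assume $E$ sits well inside $\Omega$ (the case $\Omega=\Omega'=\R^n$ being free of boundary constraints), uses invariance of the relevant quantities under closure, and reads off the degenerate cases $\dim\in\{0,n\}$ from the convention $1/0=\infty$.

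For the upper bound, fix $p<p_O^{\RH}(n,K)$ and $s>\dim_{A,reg}^{\theta(t/K)}(E)$; it is enough to show $\dim_{A,reg}^{\theta(t)}(f(E))\le ps/(p-n+s)$, after which letting $p\uparrow p_O^{\RH}(n,K)$ and $s\downarrow\dim_{A,reg}^{\theta(t/K)}(E)$ and rearranging yields the claimed inequality. So fix $y=f(x)\in f(E)$ and scales $0<\sigma\le\rho^{1+t}$ with $\rho$ small, and let $R$ be chosen so that $L_f(x,R)\asymp\rho$; power quasisymmetry of $f$ gives $f(E)\cap B(y,\rho)\subset f\bigl(E\cap B(x,cR)\bigr)$ for a constant $c=c(n,K)$. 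Now set
$$
r:=R\,(\sigma/\rho)^{p/(p-n+s)},
$$
the value that will balance the two terms below, and cover $E\cap B(x,cR)$ by $N'$ balls $B_i=B(y_i,r)$, $y_i\in E$, with bounded overlap. By Morrey's embedding on $B_i$ (licensed by \eqref{eq:pnK}, which places $Df$ in $L^p_\loc$) one has $\diam f(B_i)\le C r^{1-n/p}\lVert Df\rVert_{L^p(B_i)}$, while a second use of \eqref{eq:pnK} (on a cube of side $\asymp R$ about $x$), the change of variables $\int J_f=\lvert f(\cdot)\rvert$, and $L_f(x,\cdot)\asymp\rho$ on the relevant scale give the scale-invariant bound $\int_{B(x,2cR)}\lvert Df\rvert^p\le C\,R^{n-p}\rho^p$. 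Covering each $f(B_i)$ by at most $C\bigl(1+(\diam f(B_i)/\sigma)^n\bigr)$ balls of radius $\sigma$, summing, and applying H\"older's inequality in $i$ with exponents $p/n$ and $p/(p-n)$, one obtains
$$
N\bigl(f(E)\cap B(y,\rho),\sigma\bigr)\ \le\ C\,N'\ +\ C\,(\rho/\sigma)^n\,(r/R)^{(n-s)(p-n)/p}.
$$
The above choice of $r$ is exactly the one for which $(\rho/\sigma)^n(r/R)^{(n-s)(p-n)/p}=(R/r)^s$, and this common value equals $(\rho/\sigma)^{ps/(p-n+s)}$; hence the whole right-hand side is $\le C(\rho/\sigma)^{ps/(p-n+s)}$ as soon as $N'\le C'(R/r)^s$.

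The step requiring genuine care---and the heart of the matter---is verifying that $N'\le C'(R/r)^s$ is available, i.e.\ that the domain pair $(R,r)$ falls inside the window controlled by $\dim_{A,reg}^{\theta(t/K)}(E)$, namely $r\le R^{1+t/K}$ up to a constant. This is where one combines the image-side constraint $\sigma\le\rho^{1+t}$ with the quasiconformal (Mori) distortion bound $\rho\asymp L_f(x,R)\le C R^{1/K}$---valid because a $K$-quasiconformal map with $K_O\le K$ has $\max(K_O,K_I)\le K^{n-1}$ and is therefore locally $(1/K)$-H\"older---to get $\sigma/\rho\le\rho^{t}\le CR^{t/K}$, whence $r=R(\sigma/\rho)^{p/(p-n+s)}\le C R^{\,1+tp/(K(p-n+s))}$; since $p/(p-n+s)>1$ this exponent strictly exceeds $1+t/K$, so $r\le R^{1+t/K}$ for $\rho$ (hence $R$) small. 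By the definition of the regularized Assouad spectrum, together with its continuity in $\theta$ (cf.\ \eqref{eq:regularization}), the estimate $N'=N(E\cap B(x,cR),r)\le C'(R/r)^s$ then holds, completing the bound. This is precisely the place where the parameter shift $t\mapsto t/K$ is generated, where the \emph{regularized} (inequality-based) spectrum is indispensable---a clean relation $R=r^{\theta}$ in the domain is transported by $f$ only to a two-sided relation between $\rho$ and $\sigma$, since $f$ distorts scales by variable power-like factors with exponents roughly between $1/K$ and $K$---and where the use of \eqref{eq:pnK} in place of $W^{1,p}_\loc$-integrability forces $p_O^{\RH}$ rather than $p_O^{\Sob}$ into the final estimate. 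The remaining items (the dilatation bookkeeping in the inverse-map step, the constants in the quasisymmetry estimates, the treatment of $\rho$ bounded away from $0$, and the $\sup$/$\inf$ limits) are routine.
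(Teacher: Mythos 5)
Your proposal is correct, and the analytic inputs coincide with the paper's: the same preliminary reductions (compact $E$ well inside $\Omega$, egg-yolk/quasisymmetry to transfer balls), the same use of the scale-invariant reverse H\"older inequality \eqref{eq:pnK} together with the change of variables $\int J_f=|f(\cdot)|$ to get $\int_{Q(x,R)}|Df|^p\lesssim R^{n-p}\rho^p$, the same local $1/K$-H\"older bound to generate the shift $t\mapsto t/K$ (this is exactly where the paper verifies $m_0'\ge m_0$ via $R\ge (R')^K/C_1$), and the same reduction of the right-hand inequality to the left-hand one via $f^{-1}$ with the dilatation bookkeeping $K_O(f^{-1})=K_I(f)\le K^{n-1}$. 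Where you genuinely diverge is in the covering combinatorics. The paper follows Kaufman's stopping-time scheme: it fixes the full dyadic family in $Q(x,R)$, declares a cube major or minor according to whether $\diam f(Q)$ exceeds the target scale $r_m'=2^{-m\alpha/\beta}R'$, counts major cubes level by level via Morrey--Sobolev plus essential disjointness (Lemma \ref{Le2}), and covers $f(E)\cap B(y,R')$ by images of critical minor cubes. You instead use the classical Gehring--V\"ais\"al\"a two-term balance: a single intermediate radius $r=R(\sigma/\rho)^{p/(p-n+s)}$, the bound $N(f(B_i),\sigma)\lesssim 1+(\diam f(B_i)/\sigma)^n$, and H\"older's inequality over the index $i$ with exponents $p/n$ and $p/(p-n)$; your algebra checks out (the balancing exponent is $n(p-n+s)/p$, giving the common value $(\rho/\sigma)^{ps/(p-n+s)}$), and the window verification $r\le CR^{1+t/K}$ is exactly right, with the constant absorbed by Proposition \ref{prop:Assouad-spectrum-technical-proposition}(i). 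Your route is somewhat more streamlined and avoids the major/minor/critical taxonomy entirely; the paper's stopping-time version has the advantage of producing, for a single choice of $(x,R)$, covers at the whole geometric sequence of scales $r_m'$ simultaneously, and of localizing the Sobolev information cube by cube, which is why the same Lemma \ref{Le2} can be reused verbatim in the proof of Theorem \ref{th:main1}. Either way the key non-classical ingredients --- the regularized (inequality-based) spectrum, the $p_O^{\RH}$ exponent, and the $t/K$ shift --- enter identically.
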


\noindent Using the known sharp value $p^{\RH}(2,K)=2K/(K-1)$ we obtain the following corollary.

\begin{corollary}\label{cor:main2}
Let $f:\Omega\to\Omega'$ be a $K$-quasiconformal map between domains in $\C$. For $t>0$ and a compact set $E \subset \Omega$, we have
$$
\frac1K \left( \frac1{\dim_{A,reg}^{\theta(t/K)}(E)} - \frac12 \right) \le \frac1{\dim_{A,reg}^{\theta(t)}(f(E))} - \frac12 \le K \left( \frac1{\dim_{A,reg}^{\theta(Kt)}(E)} - \frac12 \right) \, .
$$
\end{corollary}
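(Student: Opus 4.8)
The plan is to obtain Corollary~\ref{cor:main2} as the special case $n=2$ of Theorem~\ref{th:main2}. The first observation is that for planar quasiconformal maps the inner and outer dilatations coincide, $K_I(f) = K_O(f) = K_f$, so the reverse H\"older exponent is defined without reference to either, i.e.\ $p_O^{\RH}(2,K) = p^{\RH}(2,K)$. Moreover $n-1 = 1$ when $n=2$, hence $p_O^{\RH}(2,K^{n-1}) = p^{\RH}(2,K)$ as well. Consequently the two multiplicative constants appearing in \eqref{eq:main2} --- namely $1 - n/p_O^{\RH}(n,K)$ on the left and $\bigl(1 - n/p_O^{\RH}(n,K^{n-1})\bigr)^{-1}$ on the right --- are reciprocals of one another, both governed by the single quantity $1 - 2/p^{\RH}(2,K)$.

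Next I would insert the sharp value of the planar reverse H\"older exponent. By Astala's theorem, in the form recorded in \eqref{eq:p2K2} (see \cite[Theorem 13.2.3 and Corollary 13.2.4]{aim:2qc}), we have $p^{\RH}(2,K) = 2K/(K-1)$. A one-line computation then gives
$$
1 - \frac{2}{p^{\RH}(2,K)} = 1 - \frac{2(K-1)}{2K} = \frac1K,
$$
so the left-hand constant in \eqref{eq:main2} equals $1/K$ and the right-hand constant equals $K$. Substituting $n=2$ together with these two constants into \eqref{eq:main2} produces precisely the two-sided inequality asserted in the corollary; the scale parameters $\theta(t/K)$, $\theta(t)$, $\theta(Kt)$ and the sets $E$, $f(E)$ are inherited verbatim from Theorem~\ref{th:main2}.

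I do not anticipate any genuine obstacle here: the corollary is a direct specialization of Theorem~\ref{th:main2} combined with a known sharp exponent, and the only points that merit a sentence of justification are the disappearance of the $K$ versus $K^{n-1}$ asymmetry in dimension two and the convention $1/0 = \infty$ --- already in force in the statement of Theorem~\ref{th:main2} --- which handles the degenerate cases in which one of the regularized Assouad spectrum values vanishes.
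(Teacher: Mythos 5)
Your proposal is correct and coincides with the paper's own derivation: Corollary \ref{cor:main2} is obtained by setting $n=2$ in Theorem \ref{th:main2}, noting that $K^{n-1}=K$ in the plane, and substituting the sharp value $p^{\RH}(2,K)=2K/(K-1)$ from \eqref{eq:p2K2} so that both constants reduce to $1/K$ and $K$. No issues.
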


Note that the conclusion of Theorem \ref{th:main2} remains restricted to compact sets, even in the case when the quasiconformal mapping in question is globally defined. The reason for this distinction between Theorems \ref{th:main1} and \ref{th:main2} is that the Assouad dimension is M\"obius invariant, which allows us to reduce from the general case to the case of compact sets via inversion in a small sphere in the complementary region.

This paper is organized as follows. Section \ref{sec:background} reviews the precise definitions for, and basic properties of, the Assouad dimension and the (regularized) Assouad spectrum. Section \ref{sec:spirals-classification} indicates how to derive the classification theorem for polynomial spirals, Theorem \ref{th:spirals-classification}, from Corollary \ref{cor:main2}. In Section \ref{sec:qc-dist-Assouad} we prove Theorems \ref{th:main1} and \ref{th:main2}. Section \ref{sec:questions} contains open questions and further remarks motivated by this study.

\vspace{0.1in}

\paragraph{\bf Acknowledgements.} The second author acknowledges support from the National Science Foundation under grant DMS-1600650, `Mappings and Measures in Sub-Riemannian and Metric Spaces' and from the Simons Foundation under grant \#852888, `Geometric mapping theory and geometric measure theory in sub-Riemannian and metric spaces'. In addition, this material is based upon work conducted while the second author is serving at the National Science Foundation. Any opinion, findings, and conclusions or recommendations expressed in this material are those of the authors and do not necessarily reflect the views of the National Science Foundation.

The authors gratefully acknowledge the detailed and helpful comments provided by the referee, which have substantially improved the paper.

\section{Background}\label{sec:background}

\subsection{Quasiconformal mappings.}\label{subsec:qc-mappings}

A homeomorphism $f:\Omega \to \Omega'$ between domains in $\R^n$, $n \ge 2$, is said to be {\it $K-$quasiconformal}, $K\geq1$, if $f$ lies in the local Sobolev space $W^{1,n}_\loc(\Omega:\R^n)$ and the inequality
\begin{equation}\label{eq:qc-defn}
|Df|^n \le K \det Df
\end{equation}
holds a.e.\ in $\Omega$. Here $Df$ denotes the (a.e.\ defined) differential matrix and $|\bA|=\max\{|\bA(\bv)|:|\bv|=1\}$ denotes the operator norm of a matrix $\bA$. The smallest value $K \ge 1$ for which \eqref{eq:qc-defn} holds a.e.\ in $\Omega$ is known as the {\it outer dilatation} of $f$ and is denoted $K_O(f)$. Alternatively, set $\ell(\bA):=\min\{|\bA(\bv)|:|\bv|=1\}$ and replace \eqref{eq:qc-defn} with the inequality
\begin{equation}\label{eq:qc-defn-2}
\det Df \le K \ell(Df)^n \,.
\end{equation}
The smallest choice of $K$ for which \eqref{eq:qc-defn-2} holds a.e.\ is the {\it inner dilatation} of $f$ and is denoted $K_I(f)$. These two dilatation measures are related by the mutual inequalities $K_O(f) \le K_I(f)^{n-1}$ and $K_I(f) \le K_O(f)^{n-1}$; thus $K_O(f)=K_I(f):=K_f$ when $n=2$.

For $x \in \R^n$ and $r>0$ we denote by $B(x,r)$ the (closed) Euclidean ball with center $x$ and radius $r$, and by $Q(x,r)$ the axes-parallel closed cube centered at $x$ with side length $2r$. In other words, if $x=(x_1,\ldots,x_n)$, then $Q(x,r) = [x_1-r,x_1+r] \times \cdots \times [x_n-r,x_n+r]$. Alternatively, $Q(x,r)$ is the metric ball in the $\ell^\infty$ norm on $\R^n$, with center $x$ and radius $r$.

We record the following elementary inclusions, valid for any $x \in \R^n$ and $r>0$:
\begin{equation}\label{eq:QB}
Q(x,\tfrac1{\sqrt{n}}r) \subset B(x,r) \subset Q(x,r).
\end{equation}
For $\lambda>0$ and a ball $B = B(x,r)$ (resp.\ cube $Q = Q(x,r)$), we denote by $\lambda B$ (resp.\ $\lambda Q$) the set obtained by dilating with scale factor $\lambda$, i.e., $\lambda B = B(x,\lambda r)$ and $\lambda Q = Q(x,\lambda r)$.

We recall the notion of Whitney decomposition of a domain $\Omega \subsetneq \R^n$. For each $k \ge 1$, and for such a domain $\Omega$, we can write $\Omega$ as an essentially disjoint union of closed cubes $\cW = \{ Q_i \}_{i \in I}$, where each cube $Q_i$ satisfies
\begin{equation}\label{eq:whitney}
\frac1{4k} \dist(Q_i,\partial\Omega) \le \diam(Q_i) \le \frac1k \dist(Q_i,\partial\Omega).
\end{equation}
See, e.g., \cite{ste:singular}.

The following property of quasiconformal mappings is popularly known as the `egg yolk principle'. See, for instance, \cite[Theorem 11.14]{hei:lectures}.

\begin{proposition}\label{prop:egg-yolk}
Fix $n \ge 2$ and $K \ge 1$. Then there exists an increasing homeomorphism $\eta = \eta_{K,n}$ of $[0,\infty)$ so that for any $K$-quasiconformal homeomorphism $f:\Omega \to \Omega'$ between domains in $\R^n$ and any cube $Q \subset \Omega$ with $\diam(Q) \le \dist(Q,\partial\Omega)$, the restriction $f|_Q$ is $\eta$-quasisymmetric. More precisely, if $x,y,z \in Q$ and $x \ne z$, then
$$
\frac{|f(x)-f(y)|}{|f(x)-f(z)|} \le \eta \left( \frac{|x-y|}{|x-z|} \right) \, .
$$
\end{proposition}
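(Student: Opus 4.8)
The plan is to run the classical modulus argument, taking care that every constant that appears ends up depending only on $n$ and $K$. First I would normalize the geometry. Writing $Q = Q(c,s)$, the hypothesis $\diam(Q)\le\dist(Q,\partial\Omega)$ reads $2\sqrt n\,s\le\dist(Q,\partial\Omega)$, so that $Q\subset B:=B(c,\sqrt n\,s)$ and, after an arbitrarily small dilation which I suppress, $\lambda B\subset\Omega$ with $\lambda=\lambda(n):=2\sqrt n$. It therefore suffices to produce an increasing homeomorphism $\eta$ of $[0,\infty)$, depending only on $n$ and $K$, such that every $K$-quasiconformal $f\colon\Omega\to\Omega'$ is $\eta$-quasisymmetric on every ball $B$ with $\lambda B\subset\Omega$; the point of the hypothesis on $Q$ is exactly that it pins the ``amount of room'' $\lambda$ to a purely dimensional quantity, so the resulting $\eta$ never sees $Q$, $\Omega$, or $f$ individually.

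Next I would bring in the analytic input. Since $f\in W^{1,n}_\loc(\Omega:\R^n)$ satisfies \eqref{eq:qc-defn} with constant $K$, the conformal modulus of curve families is quasi-preserved: there is $K'=K'(n,K)$ with
$$
\tfrac1{K'}\,\operatorname{Mod}(\Gamma)\le\operatorname{Mod}(f\Gamma)\le K'\,\operatorname{Mod}(\Gamma)
$$
for every curve family $\Gamma$ in $\Omega$, the two estimates coming from the $K_O$- and $K_I$-modulus inequalities together with $K_O(f)\le K$ and $K_I(f)\le K^{\,n-1}$. I would couple this with the two classical one-variable facts about moduli of ring domains in $\R^n$ — the Gr\"otzsch-type lower bound, which forces the modulus of a ring separating two continua to be large when their relative distance is large, and the Teichm\"uller-type upper bound, which does the reverse (see \cite{hei:lectures}). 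These are the estimates that convert a modulus inequality into a metric one, and they are also purely dimensional.

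The heart of the proof is then a three-point estimate. Fix $x,y,z\in Q\subset B$ with $x\ne z$ and set $t:=|x-y|/|x-z|$. For $t$ small I would connect $x$ to $y$ by a curve inside a ball of radius $|x-y|$ about $x$, connect $z$ to $\partial(\lambda B)$ by a curve (here the room provided by $\lambda B$ is used), and observe that the ring separating these two continua has modulus at least an increasing dimensional function of $1/t$ by the Gr\"otzsch bound. Transporting this through the modulus quasi-invariance to the image ring and then applying the Teichm\"uller upper bound yields $|f(x)-f(y)|/|f(x)-f(z)|\le\eta_0(t)$ with $\eta_0(t)\to 0$ as $t\to0$ and $\eta_0$ depending only on $n$ and $K$. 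For $t$ bounded away from $0$ one argues symmetrically — swapping the roles of the two scales, or passing to the estimate for the quasiconformal inverse — to get a bound that is finite for each such $t$. Gluing the two regimes and dominating by an honest increasing homeomorphism of $[0,\infty)$ produces $\eta=\eta_{K,n}$.

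The main obstacle, and really the only delicate point, is uniformity: the modulus quasi-invariance constant $K'$, the room factor $\lambda$, and the Gr\"otzsch/Teichm\"uller bounding functions must all be traced back to depend on $n$ and $K$ alone, which the normalization step arranges for $\lambda$ and which is classical for the rest. A secondary technical care is that $f(\lambda B)$ need not be a ball, so the lower modulus estimate has to be run on the image side using the ring-domain estimates in the ambient $\R^n$ rather than by comparison with round balls — hence the argument stays with the image continua directly — and one must check that the auxiliary curve from $z$ genuinely has room to reach $\partial(\lambda B)$, which is again exactly what $\diam(Q)\le\dist(Q,\partial\Omega)$ guarantees.
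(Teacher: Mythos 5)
The paper does not prove this proposition at all: it is quoted as the standard ``egg yolk principle'' with a citation to \cite[Theorem 11.14]{hei:lectures}, so there is no in-paper argument to compare against. Your sketch is precisely the classical modulus proof behind that citation --- normalize to a ball with definite room, quasi-invariance of the modulus of curve families with constant depending only on $n$ and $K$, and the Gr\"otzsch/Teichm\"uller ring estimates to convert modulus bounds into the three-point inequality --- and it is correct in outline, including the two uniformity caveats you flag. One small slip: with $B=B(c,\sqrt{n}\,s)$ the hypothesis $\diam(Q)\le\dist(Q,\partial\Omega)$ only guarantees $\lambda B\subset\Omega$ for $\lambda=2$, not $\lambda=2\sqrt{n}$ (the latter would need $\dist(c,\partial\Omega)\ge 2ns$); this is harmless since any fixed $\lambda>1$ depending only on $n$ suffices for the argument.
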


The following corollary is standard, but for the benefit of the reader we provide a short proof.

\begin{corollary}\label{cor:egg-yolk}
For each $n \ge 2$, $K \ge 1$, and $c>1$, there exists $k>1$ so that if $f:\Omega \to \Omega'$ is a $K$-quasiconformal mapping between domains in $\R^n$ and $Q \subset \Omega$ is a cube with $\diam(Q) \le \tfrac1k \dist(Q,\partial\Omega)$, then there exists a cube $Q' \subset \Omega'$ so that $\diam(Q') \le \tfrac1c \dist(Q',\partial\Omega')$ and
\begin{equation}\label{eq:egg-yolk}
f(Q) \subset Q' \subset 2Q' \subset f(kQ).
\end{equation}
Similarly, if $B=B(x,R) \subset \Omega$ is a ball with $\diam(B) \le \tfrac1k \dist(B,\partial\Omega)$, then there exists a ball $B'=B(f(x),R') \subset \Omega'$ so that $\diam(B') \le \tfrac1c \dist(B',\partial\Omega')$ and
\begin{equation}\label{eq:egg-yolk2}
	f(B) \subset B' \subset 2B' \subset f(kB).
\end{equation}
\end{corollary}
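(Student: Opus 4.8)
The plan is to deduce both inclusions \eqref{eq:egg-yolk} and \eqref{eq:egg-yolk2} from the egg-yolk principle, Proposition \ref{prop:egg-yolk}, by choosing $k$ so large that the quasisymmetric distortion of $f$ on an intermediate cube comparable to $kQ$ is negligible. I will describe the argument for cubes; the ball version is identical once one passes between balls and cubes via \eqref{eq:QB}. First I would normalize $Q=Q(x_0,R)$, set $w_0:=f(x_0)$ and introduce the intermediate cube $\hat Q:=\tfrac k3 Q=Q(x_0,\tfrac k3 R)$. From the hypothesis $\diam(Q)\le\tfrac1k\dist(Q,\partial\Omega)$ one gets $\diam(\hat Q)=\tfrac k3\diam(Q)\le\tfrac13\dist(Q,\partial\Omega)$, while any point of $\hat Q$ lies within $\tfrac k6\diam(Q)$ of $Q$, so $\dist(\hat Q,\partial\Omega)\ge\dist(Q,\partial\Omega)-\tfrac k6\diam(Q)\ge\tfrac56\dist(Q,\partial\Omega)$. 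Hence $\diam(\hat Q)\le\dist(\hat Q,\partial\Omega)$ and $\hat Q$ is compactly contained in $\Omega$ (when $\Omega=\R^n$ there is nothing to check). Proposition \ref{prop:egg-yolk} then gives that $f|_{\hat Q}$ is $\eta$-quasisymmetric with $\eta=\eta_{K,n}$, and $f(\hat Q)$ is a compact subset of $\Omega'$.

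Next I would extract the quasisymmetry estimate. Assume in addition $k\ge 3\sqrt n$, so that every point at Euclidean distance $R\sqrt n$ from $x_0$ and every point at distance $\tfrac k3 R$ from $x_0$ lies in $\hat Q$. Put $L:=\max_{x\in Q}|f(x)-w_0|$ and $m:=\min_{|y-x_0|=\frac k3 R}|f(y)-w_0|$, both attained by compactness. Choosing $x\in Q$ realizing $L$ (so $|x-x_0|\le R\sqrt n$) and $y$ realizing $m$, the $\eta$-quasisymmetry of $f|_{\hat Q}$ and monotonicity of $\eta$ yield
$$
L \;=\; |f(x)-w_0| \;\le\; \eta\!\left(\frac{|x-x_0|}{|y-x_0|}\right)|f(y)-w_0| \;\le\; \eta\!\left(\frac{3\sqrt n}{k}\right) m \;=:\; \delta_k\, m ,
$$
and, since $\eta$ is an increasing self-homeomorphism of $[0,\infty)$, the factor $\delta_k=\eta(3\sqrt n/k)$ tends to $0$ as $k\to\infty$.

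Then I would build $Q'$ and verify the three requirements. Set $Q':=Q(w_0,L)$. Then $f(Q)\subset\bar B(w_0,L)\subset Q'$ by \eqref{eq:QB}, giving the first inclusion of \eqref{eq:egg-yolk} and, since $L\sqrt n<m$ for large $k$ (see below), also $Q'\subset\bar B(w_0,m)\subset f(\hat Q)\subset\Omega'$. By the standard fact (invariance of domain together with Jordan--Brouwer separation) that an embedding of a closed ball contains the concentric ball whose radius is the minimal displacement of the bounding sphere, $\bar B(w_0,m)\subset f(\bar B(x_0,\tfrac k3 R))\subset f(\hat Q)\subset f(kQ)$. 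Since $2Q'=Q(w_0,2L)\subset\bar B(w_0,2L\sqrt n)$ by \eqref{eq:QB}, the condition $2\sqrt n\,\delta_k\le 1$ gives $2Q'\subset\bar B(w_0,m)\subset f(kQ)$, completing \eqref{eq:egg-yolk}. Finally $Q'\subset\bar B(w_0,L\sqrt n)$ and $B(w_0,m)\subset\Omega'$ force $\dist(Q',\partial\Omega')\ge m-L\sqrt n$, while $\diam(Q')=2L\sqrt n\le 2\sqrt n\,\delta_k\,m$; hence $\diam(Q')\le\tfrac1c\dist(Q',\partial\Omega')$ holds as soon as $m\ge(2c+1)L\sqrt n$, i.e.\ as soon as $(2c+1)\sqrt n\,\delta_k\le 1$. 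Choosing $k\ge 3\sqrt n$ large enough that $(2c+1)\sqrt n\,\delta_k\le 1$ (which a fortiori gives $2\sqrt n\,\delta_k\le1$ and $L\sqrt n<m$) satisfies all constraints, with $k$ depending only on $n$, $K$, $c$. Running the identical computation with $B(x,\cdot)$ in place of $Q(x_0,\cdot)$ proves \eqref{eq:egg-yolk2}.

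The only genuinely delicate point is the first step: one must check that the modest hypothesis $\diam(Q)\le\tfrac1k\dist(Q,\partial\Omega)$ still forces a cube comparable to $kQ$ to satisfy the admissibility condition $\diam(\hat Q)\le\dist(\hat Q,\partial\Omega)$ of Proposition \ref{prop:egg-yolk}, so that quasisymmetric control of $f$ is available at scale $\sim kQ$ rather than merely near $Q$. Once this is in place, everything rests on the decay $\delta_k\to 0$ and routine bookkeeping of the $\sqrt n$ factors relating cubes to balls.
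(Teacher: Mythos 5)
Your argument is correct and follows essentially the same route as the paper's own proof: apply the egg-yolk quasisymmetry to compare the maximal displacement over the small cube with the minimal displacement on a concentric sphere of radius comparable to $k$ times the original, invoke the topological fact that the image of a ball contains the ball of radius equal to that minimal displacement, and choose $k$ so that $\eta$ of the small ratio forces both the doubling inclusion and the relative-distance condition. The only differences are cosmetic (you work with cubes first and track the $\sqrt{n}$ factors explicitly, whereas the paper proves the ball case and transfers via \eqref{eq:QB}), and your explicit verification that the intermediate cube $\hat Q$ satisfies the admissibility hypothesis of Proposition \ref{prop:egg-yolk} is a point the paper treats more casually.
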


\begin{proof}
We will prove the corresponding statement for balls instead of cubes; the analogous result for cubes follows immediately upon appeal to \eqref{eq:QB} (increasing the value of $k$ if necessary).

\vspace{0.1in}

With $\eta$ as in Proposition \ref{prop:egg-yolk}, choose $k \ge 1$ so that $\eta(1/k) < 1/(3c)$. Let $B=B(x,R)$ and assume that $\diam(B) \le \tfrac1k \dist(B,\partial\Omega)$. Then $2kB \subset \Omega$. Set $B'=B(f(x),R')$ where
$$
R' := \max\{|f(x)-f(y)|\,:\,|x-y| = R\} \, .
$$
We claim that $3cB' \subset \Omega'$ (which in turn implies that $\diam(B') \le \tfrac1c \dist(B',\partial\Omega')$) and
$$
f(B) \subset B' \subset 2B' \subset f(kB).
$$
The inclusion $f(B) \subset B'$ is immediate from the definition of $R$. Fix $y,z \in \Omega$ so that $|x-y|=R$ and $|f(x)-f(y)|=R'$, while $|x-z|=kR$ and
\begin{equation}\label{eq:z}
|f(x)-f(z)| = \min\{|f(x)-f(w)|\,:\,|x-w| = kR\} \, .
\end{equation}
Appealing to Proposition \ref{prop:egg-yolk}, we obtain
$$
\frac{|f(x)-f(y)|}{|f(x)-f(z)|} \le \eta \left( \frac{|x-y|}{|x-z|} \right) = \eta(\frac1k) < \frac1{3c}.
$$
Since $2kB \subset \Omega$ we conclude from \eqref{eq:z} that $3cR' = 3c|f(x)-f(y)| \le |f(x)-f(z)| < \dist(f(x),\partial\Omega')$. Hence $3cB' \subset \Omega'$. Moreover,
$$
2B' = B(f(x),2|f(x)-f(y)|) \subset B(f(x),\tfrac{2}{3c}|f(x)-f(z)|) \subset B(f(x),|f(x)-f(z)|)\subset f(kB). \qedhere
$$
\end{proof}

In fact, we will use a slight extension of Corollary \ref{cor:egg-yolk}. While the proof of the following statement is elementary, we are not aware of any explicit reference. For the convenience of the reader, and in order to make the proofs in later sections clearer, we provide the statement and a short proof.

\begin{corollary}\label{cor:egg-yolk-2}
For each $n \ge 2$, $K \ge 1$, and $c>1$, there exists $k>1$ so that if $f:\Omega \to \Omega'$ is as in Corollary \ref{cor:egg-yolk}, $E \subset \Omega$ is a set with $\diam(f(E))<\tfrac12$, and $B=B(x,R) \subset \Omega$ is a ball with $x\in E$, $R<1$, and $\diam(B) \le \tfrac1k \dist(B,\partial\Omega)$, then there exists a ball $B'=B(f(x),R') \subset \Omega'$ so that 
\begin{itemize}
\item[(i)] $R'<1$, 
\item[(ii)] $\diam(B') \le \tfrac1c \dist(B',\partial\Omega')$ and
\item[(iii)] $f(B\cap E) \subset B' \cap f(E) \subset 2B'\cap f(E) \subset f(kB \cap E)$.
\end{itemize}
\end{corollary}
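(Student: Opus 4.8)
The plan is to derive the statement from Corollary \ref{cor:egg-yolk} and then truncate the radius of the output ball so as to enforce condition (i).

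Let $k>1$ be the constant supplied by Corollary \ref{cor:egg-yolk} for the given $n$, $K$, and $c$. Given $E$ and $B=B(x,R)$ satisfying the hypotheses (which are strictly stronger than those required by Corollary \ref{cor:egg-yolk}), that corollary produces a ball $B_0'=B(f(x),R_0')\subset\Omega'$ with $\diam(B_0')\le\tfrac1c\dist(B_0',\partial\Omega')$ and
$$
f(B)\subset B_0'\subset 2B_0'\subset f(kB).
$$
Since $f$ may stretch considerably near $x$, there is no reason for $R_0'$ to be smaller than $1$. To remedy this I would set $R':=\min\{R_0',\tfrac12\}$ and $B':=B(f(x),R')$, so that (i) holds automatically.

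It then remains to verify (ii) and (iii). For (ii), the inclusion $B'\subseteq B_0'$ yields $\dist(B',\partial\Omega')\ge\dist(B_0',\partial\Omega')\ge c\,\diam(B_0')\ge c\,\diam(B')$. For (iii), the key observation is that $x\in E$ together with $\diam(f(E))<\tfrac12$ forces $f(E)\subset B(f(x),\tfrac12)$; hence, using injectivity of $f$,
$$
f(B\cap E)=f(B)\cap f(E)\subset B_0'\cap B(f(x),\tfrac12)=B(f(x),\min\{R_0',\tfrac12\})=B',
$$
and combined with the trivial inclusion $f(B\cap E)\subset f(E)$ this gives $f(B\cap E)\subset B'\cap f(E)$. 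The middle inclusion $B'\cap f(E)\subset 2B'\cap f(E)$ is immediate, and since $2R'\le 2R_0'$ we have $2B'\subseteq 2B_0'$, whence $2B'\cap f(E)\subseteq 2B_0'\cap f(E)\subseteq f(kB)\cap f(E)=f(kB\cap E)$, again by injectivity of $f$.

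There is no substantial obstacle here; the one point to notice is that capping $R_0'$ at $\tfrac12$ does not spoil any inclusion in (iii): shrinking $B'$ only strengthens the containment $2B'\cap f(E)\subseteq f(kB)$, while $f(B\cap E)\subseteq B'$ survives precisely because the diameter bound on $f(E)$ already confines $f(B\cap E)$ to the small ball $B(f(x),\tfrac12)$. Everything else is routine manipulation of images of intersections under the bijection $f$.
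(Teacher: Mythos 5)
Your proposal is correct and follows essentially the same route as the paper: apply Corollary \ref{cor:egg-yolk}, then truncate the radius at (roughly) $\tfrac12$ and use $\diam(f(E))<\tfrac12$ to see that the truncation does not disturb the intersections with $f(E)$. The only cosmetic difference is that the paper caps the radius at $\tfrac12-\delta$ for a small $\delta$ rather than at $\tfrac12$, which changes nothing in the argument.
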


\begin{proof}
Using Corollary \ref{cor:egg-yolk}, select a ball $B''=B(f(x),R'')$ with $\diam B'' \le \tfrac1c \dist(B'',\partial\Omega')$ and $f(B) \subset B'' \subset 2B'' \subset f(kB)$. Fix $\delta>0$ so that $\diam(f(E))<\tfrac12-\delta$ and set
\begin{equation}\label{eq:R-R'}
R' = \min\{R'',\tfrac12-\delta\}
\end{equation}
and $B' = B(f(x),R')$. It is clear that (i) holds true. The validity of (ii) follows since $B' \subset B''$. Finally, since $f(E) \subset B(f(x),\tfrac12-\delta)$ we have $B'\cap f(E) = B'' \cap f(E)$ and $2B'\cap f(E) = 2B'' \cap f(E)$, and so
$$
f(B \cap E) \subset B'' \cap f(E) = B' \cap f(E) \subset 2B'' \cap f(E) = 2B' \cap f(E) \subset f(kB \cap E).  \qedhere
$$
\end{proof}

\subsection{Assouad dimension and the Assouad spectrum.}\label{subsec:assouad}

Let $F$ be a bounded subset of $\R^n$. For $r>0$, denote by $N(F,r)$ the smallest number of sets of diameter at most $r$ needed to cover $F$. The {\it (upper) box-counting dimension} of $F$ is
$$
\ovdimB(F) = \limsup_{r\to 0} \frac{\log N(F,r)}{\log(1/r)}.
$$
We drop the adjective `upper' throughout this paper as we will make no reference to the lower box-counting dimension. An equivalent formulation is
$$
\ovdimB(F) = \inf \{\alpha>0 \,:\, \exists\,C>0\mbox{ s.t. } N(F,r) \le C r^{-\alpha} \mbox{ for all $0<r\le\diam(F)$} \}.
$$
Since any bounded set can be covered by finitely many balls of radius $1$, another equivalent formulation is
$$
\ovdimB(F) = \inf \left\{\alpha>0 \,:\, {\exists\,C>0\mbox{ s.t. } N(B(x,1) \cap F,r) \le C (1/r)^{\alpha} \atop \mbox{ for all $0<r\le 1$ and all $x \in F$}} \right\},
$$
and here we observe that the expression on the right hand side also makes sense for unbounded $F$. The latter formulation also shows the connection between box-counting and Assouad dimension. For an arbitrary (not necessarily bounded) set $F \subset \R^n$, the {\it Assouad dimension} of $F$ is
$$
\dim_A(F) = \inf \left\{\alpha>0 \,:\, {\exists\,C>0\mbox{ s.t. } N(B(x,R) \cap F,r) \le C (R/r)^{\alpha} \atop \mbox{ for all $0<r\le R$ and all $x \in F$}} \right\}.
$$
Assouad dimension first appeared (under a different name) in a 1983 paper of Assouad on metric embedding problems \cite{Assouad83}. Luukkainen \cite{Luukkainen} gave a detailed presentation of the state-of-the-art in the theory of Assouad dimension as of the late 1990s. We also recommend the recent book by Fraser \cite{Fraser2020}. It is perhaps important to specify that in the aforementioned book the author defines $N(F,r)$ to be the smallest number of open sets of diameter at most $r$ needed to cover $F$, and uses this covering number in the definition of all dimensions and spectra. It is an elementary exercise to prove that this yields the same exact notions of dimension and spectrum.

It is clear from the definitions that $\ovdimB(F) \le \dim_A(F)$ for bounded sets $F$. Both box-counting dimension and Assouad dimension are monotonic and finitely stable, but neither quantity is countably stable. Indeed, both notions of dimension are invariant under passing to the closure, thus the dimension of any dense subset of $\R^n$ is equal to $n$. Such behavior is in sharp contrast to that exhibited by Hausdorff dimension, which is zero for any countable set. An illustrative example is $F = \{0\} \cup \{m^{-1}\,:\,m \in \N\} \subset \R$, for which $\dim_H(F) = 0$, $\ovdimB(F) = \tfrac12$, and $\dim_A(F) = 1$.

Fraser and Yu \cite{fy:assouad-spectrum} introduced the Assouad spectrum as an interpolation between upper box-counting dimension and Assouad dimension. As discussed in the introduction, we employ a slight modification of the definition, as discussed in \cite{fhhty:two} and \cite[Section 3.3.2]{Fraser2020}. For $0<\theta<1$ and a set $F \subset \R^n$, define
\begin{equation}\label{def:Assouad-spectrum}
\dim_{A,reg}^\theta(F) = \inf \left\{\alpha>0 \,:\, {\exists\,C>0\mbox{ s.t. } N(B(x,R) \cap F,r) \le C (R/r)^{\alpha} \atop \mbox{ for all $0<r\le R^{1/\theta}< R< 1$ and all $x \in F$}} \right\}.
\end{equation}
Thus $\dim_{A,reg}^\theta(F)$ is defined by the same process as $\dim_A(F)$, but with the restriction that the two scales $r$ and $R$ involved in the definition of the latter are related by the inequality $R \ge r^\theta$. The set of values $\{ \dim_{A,reg}^\theta(F) \,:\, 0<\theta<1\}$ is called the {\it regularized Assouad spectrum} of $F$. We collect various properties of the regularized Assouad spectrum in the following proposition. Proofs of these results for the (original) Assouad spectrum can be found in \cite[Sections 3.3 and 3.4]{Fraser2020}, and the corresponding results for the regularized Assouad spectrum follow easily from the regularization identity \eqref{eq:regularization}.\\

\begin{proposition}\label{prop:Assouad-spectrum-facts}
The regularized Assouad spectrum enjoys the following features.
\begin{itemize}
\item[(1)] For fixed $\theta$, the set function $F \mapsto \dim_{A,reg}^\theta(F)$ is
\begin{itemize}
\item[(a)] monotonic, i.e., $E \subset F$ implies $\dim_{A,reg}^\theta(E) \le \dim_{A,reg}^\theta(F)$,
\item[(b)] finitely stable, i.e., $\dim_{A,reg}^\theta(E\cup F) = \max \{ \dim_{A,reg}^\theta(E),\dim_{A,reg}^\theta(F) \}$,
\item[(c)] invariant under taking closures, and
\item[(d)] invariant under bi-Lipschitz transformation.
\end{itemize}
\item[(2)] For a fixed set $F$, the function $\theta \mapsto \dim_{A,reg}^\theta(F)$ is nondecreasing, continuous on $(0,1)$, and Lipschitz on compact subsets of $(0,1)$.
\item[(3)] For fixed $F$, $\lim_{\theta\to 0^+}\dim_{A,reg}^\theta(F)$ exists and equals $\ovdimB(F)$. Moreover, $\lim_{\theta\to 1^-}\dim_{A,reg}^\theta(F)$ coincides with the so-called {\it quasi-Assouad dimension} of $F$, denoted $\dim_{qA}(F)$. For any $\theta$, one has $\dim_{A,reg}^\theta(F) \le \dim_{qA}(F) \le \dim_A(F)$. If $\dim_{A,reg}^\theta(F) = \dim_{qA}(F)$ for some $0<\theta<1$, then $\dim_{A,reg}^{\theta'}(F) = \dim_{qA}(F)$ for all $\theta\le\theta'<1$.
\item[(4)] Set
\begin{equation}\label{eq:rho-definition}
\rho = \rho(F) := \inf\{\theta \in (0,1):\dim_{A,reg}^\theta(F) = \dim_{qA}(F)\},
\end{equation}
or $\rho = 1$ if no such $\theta$ exists. Then
$$
\dim_{A,reg}^\theta(F) \ge (\tfrac{1-\rho}{1-\theta}) \dim_{qA}(F)
$$
for all $0<\theta<\rho$.
\item[(5)] For $F$ bounded and $0<\theta<1$,
\begin{equation}\label{eq:fraser-bound}
\dim_{A,reg}^\theta(F) \le \frac{\ovdimB(F)}{1-\theta} \, .
\end{equation}
Hence the phase transition $\rho(F)$ defined in \eqref{eq:rho-definition} satisfies
\begin{equation}\label{eq:rho-inequality}
\rho(F) \ge 1-\frac{\ovdimB(F)}{\dim_{qA}(F)}\,.
\end{equation}
\end{itemize}
\end{proposition}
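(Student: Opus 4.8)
The plan is to deduce each item from its counterpart for the non-regularized Assouad spectrum, established in \cite[Sections 3.3--3.4]{Fraser2020}, by systematically invoking the regularization identity \eqref{eq:regularization}, which we use in the form $\dim_{A,reg}^\theta(F)=\sup_{0<\theta'<\theta}\dim_A^{\theta'}(F)$. Item (1) then follows at once: monotonicity, closure-invariance, and bi-Lipschitz invariance hold for every $\dim_A^{\theta'}$ and pass to the supremum, while finite stability follows from $\dim_A^{\theta'}(E\cup F)=\max\{\dim_A^{\theta'}(E),\dim_A^{\theta'}(F)\}$ together with the trivial interchange of $\sup$ and $\max$. For item (2), monotonicity in $\theta$ is clear because enlarging $\theta$ enlarges the index set of the supremum; for continuity and the local Lipschitz bound I would combine the (known) local Lipschitz property of $\theta'\mapsto\dim_A^{\theta'}(F)$ with the elementary fact that the running supremum of an $L$-Lipschitz function is again $L$-Lipschitz, after writing $\dim_{A,reg}^\theta(F)$ on a compact subinterval of $(0,1)$ as a maximum of a constant and such a running supremum.

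For items (3) and (5) I would argue as follows. Since $\theta\mapsto\dim_{A,reg}^\theta(F)$ is non-decreasing and bounded, its one-sided limits at $0$ and $1$ exist; combining \eqref{eq:regularization} with the facts that $\dim_A^{\theta'}(F)\to\ovdimB(F)$ as $\theta'\to0^+$ and that $\dim_{qA}(F)=\sup_{0<\theta'<1}\dim_A^{\theta'}(F)$ (see \cite[Section 3.3]{Fraser2020}) identifies these limits as $\ovdimB(F)$ and $\dim_{qA}(F)$ respectively. The chain $\dim_{A,reg}^\theta(F)\le\dim_{qA}(F)\le\dim_A(F)$ then follows from monotonicity in $\theta$, and the stabilization statement from the observation that a bounded non-decreasing function is constant once it attains its supremum. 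Inequality \eqref{eq:fraser-bound} follows by taking the supremum over $\theta'<\theta$ of the Fraser--Yu bound $\dim_A^{\theta'}(F)\le\ovdimB(F)/(1-\theta')$ and using $\sup_{\theta'<\theta}(1-\theta')^{-1}=(1-\theta)^{-1}$ (a direct two-scale covering estimate also works), and then \eqref{eq:rho-inequality} is immediate from \eqref{eq:fraser-bound} and the definition \eqref{eq:rho-definition} of $\rho$.

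The genuinely substantive point, and the step I expect to be the main obstacle, is item (4). I would first establish the scaling inequality
$$
\dim_{A,reg}^\theta(F)\ \ge\ \frac{1-\theta'}{1-\theta}\,\dim_{A,reg}^{\theta'}(F)\qquad\text{for all }0<\theta<\theta'<1,
$$
and then deduce (4) by applying it with $\theta'=\rho$: when $\rho<1$, the continuity from item (2) shows that $\{\theta\in(0,1):\dim_{A,reg}^\theta(F)=\dim_{qA}(F)\}$ is closed and hence equals $[\rho,1)$, so $\dim_{A,reg}^\rho(F)=\dim_{qA}(F)$; the case $\rho=1$ is trivial since the right-hand side of (4) then vanishes. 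To prove the scaling inequality, fix $\alpha<\dim_{A,reg}^{\theta'}(F)$ and use \eqref{eq:regularization} to pick $\eta<\theta'$ with $\dim_A^\eta(F)>\alpha$. If $\eta<\theta$, then $\dim_{A,reg}^\theta(F)\ge\dim_A^\eta(F)>\alpha\ge\frac{1-\theta'}{1-\theta}\alpha$. If $\theta\le\eta<\theta'$, then the failure of the ratio-$\eta$ covering bound at exponent $\alpha$ yields, via boundedness of the balls being covered, a sequence $x_j\in F$ and $R_j\to0$ with $N(B(x_j,R_j)\cap F,R_j^{1/\eta})\gtrsim j\,(R_j/R_j^{1/\eta})^\alpha$; setting $\tilde R_j:=R_j^{\theta/\eta}\ (\ge R_j)$ so that $R_j^{1/\eta}=\tilde R_j^{1/\theta}$ and $B(x_j,\tilde R_j)\supseteq B(x_j,R_j)$, and rewriting the exponents in powers of $\tilde R_j$, one obtains $N(B(x_j,\tilde R_j)\cap F,\tilde R_j^{1/\theta})\gtrsim j\,(\tilde R_j/\tilde R_j^{1/\theta})^{\frac{1-\eta}{1-\theta}\alpha}$, which is a valid configuration for the regularized spectrum at parameter $\theta$ and therefore gives $\dim_{A,reg}^\theta(F)\ge\frac{1-\eta}{1-\theta}\alpha\ge\frac{1-\theta'}{1-\theta}\alpha$. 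Letting $\alpha\uparrow\dim_{A,reg}^{\theta'}(F)$ completes the proof of the scaling inequality. The two points requiring care are the extraction of a cofinal sequence with $R_j\to0$ from the mere failure of a covering bound, and the exponent arithmetic under the change of scale ratio; the rest reduces to bookkeeping once \eqref{eq:regularization} is available.
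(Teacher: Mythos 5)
Your proposal is correct and follows essentially the same route as the paper, which simply cites the corresponding facts for the unregularized spectrum from \cite[Sections 3.3 and 3.4]{Fraser2020} and observes that they transfer via the regularization identity \eqref{eq:regularization}. The only difference is one of detail: where the paper would cite \cite[Theorem 3.3.1]{Fraser2020} for the scaling inequality underlying item (4) (as it does later in the proof of Proposition \ref{prop:final}), you reprove it from scratch by a rescaling of bad configurations, and your exponent arithmetic and the extraction of a sequence $R_j\to 0$ both check out.
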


Note that the Assouad spectrum function $\theta \mapsto \dim_A^\theta(F)$ is {\bf not} always monotonically increasing. For an example, see \cite[Section 3.4.4]{Fraser2020}.

In the proof of our main theorems in section \ref{sec:qc-dist-Assouad}, it will be convenient to take advantage of several alternative descriptions for the regularized Assouad spectrum. We collect several such descriptions in the following proposition. In part (ii) of the proposition we make use of the standard dyadic decomposition. Specifically, for a set $F \subset \R^n$ and $x \in F, R>0$ we consider the axes-parallel cube $Q:=Q(x,R) \subset \R^n$, we subdivide $Q$ into $2^n$ essentially disjoint subcubes, each with side length equal to half of the side length of $Q$, and then we subdivide each of those cubes in the same fashion, and so on. Let $\cW(Q)$ denote the collection of all such cubes obtained at any level of the construction, and let $\cW_m(Q)$ denote the collection of all cubes obtained after $m$ steps. We will denote by $N_d(B(x,R) \cap F,m)$ the number of dyadic cubes in $\cW_m(Q)$ needed to cover $B(x,R) \cap F$.

\begin{proposition}\label{prop:Assouad-spectrum-technical-proposition}
Let $F \subset \R^n$ be a bounded subset and let $0<\theta<1$.
\begin{itemize}
\item[(i)] Fix a value $C_1 \ge 1$. Then the regularized Assouad spectrum value $\dim_{A,reg}^\theta(F)$ is equal to the infimum of all $\alpha>0$ for which there exists $C>0$ so that
    $$
    N(B(x,R) \cap F,r) \le C(\tfrac{R}{r})^\alpha
    $$
    for all $x \in F$ and all $0<r/C_1\le R^{1/\theta} < R < 1$.

\item[(ii)] The regularized Assouad spectrum value $\dim_{A,reg}^\theta(F)$ is equal to the infimum of all $\alpha>0$ for which there exists $C>0$ so that
    $$
    N_d(B(x,R) \cap F,m) \le C 2^{m\alpha}
    $$
    for all $x \in F$ and all $0<2^{-m}R\le R^{1/\theta} < R < 1$.
\end{itemize}
\end{proposition}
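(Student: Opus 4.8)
The plan is to deduce both statements directly from the definition \eqref{def:Assouad-spectrum} of $\dim_{A,reg}^\theta(F)$ by elementary covering comparisons; the only real subtlety is that, unlike for Assouad dimension, the two scales here are coupled by the constraint $r\le R^{1/\theta}<R<1$, so at each change of scale one must check that the new pair of scales still lies in the admissible range.

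For part (i), write $\widetilde\alpha$ for the infimum appearing in the statement. One inequality is immediate: the set of pairs $(r,R)$ with $0<r/C_1\le R^{1/\theta}<R<1$ contains the set with $0<r\le R^{1/\theta}<R<1$, so any exponent admissible for $\widetilde\alpha$ is admissible for $\dim_{A,reg}^\theta(F)$, giving $\dim_{A,reg}^\theta(F)\le\widetilde\alpha$. For the reverse inequality I would fix $\alpha>\dim_{A,reg}^\theta(F)$, with the corresponding bound $N(B(x,R)\cap F,r)\le C(R/r)^\alpha$ valid on $0<r\le R^{1/\theta}<R<1$, and extend it to the larger range $R^{1/\theta}<r\le C_1R^{1/\theta}$ using monotonicity of $r\mapsto N(B(x,R)\cap F,r)$: bound $N(B(x,R)\cap F,r)$ by $N(B(x,R)\cap F,R^{1/\theta})$, apply the hypothesis at scale $R^{1/\theta}$, and absorb the extra factor $C_1^\alpha$ coming from the inequality $R/r\ge R^{1-1/\theta}/C_1$. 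This shows $\alpha$ is admissible for $\widetilde\alpha$, hence $\widetilde\alpha\le\dim_{A,reg}^\theta(F)$, and equality follows.

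For part (ii), I would invoke the two standard one-sided comparisons between arbitrary covers and dyadic covers, keeping careful track of the level shift. On one side, since any set of diameter at most $2^{-m}R$ meets at most $2^n$ cubes of $\cW_m(Q(x,R))$, one has $N_d(B(x,R)\cap F,m)\le 2^n\,N(B(x,R)\cap F,2^{-m}R)$; feeding in the hypothesis of the plain definition at scale $r=2^{-m}R$ — admissible precisely when $2^{-m}R\le R^{1/\theta}<R<1$ — yields the desired $N_d$-bound, so the infimum $\beta$ in (ii) satisfies $\beta\le\dim_{A,reg}^\theta(F)$. On the other side, a cube of $\cW_m(Q(x,R))$ has diameter $\sqrt{n}\,2^{1-m}R$, so choosing $m=m(r,R)$ to be the least integer with $\sqrt{n}\,2^{1-m}R\le r$ gives $N(B(x,R)\cap F,r)\le N_d(B(x,R)\cap F,m)$ and, by minimality of $m$, $2^m\le 4\sqrt{n}\,R/r$. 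One then checks $2^{-m}R\le r\le R^{1/\theta}$, so the dyadic hypothesis applies at level $m$ and converts into $N(B(x,R)\cap F,r)\le C\,(4\sqrt{n})^\alpha(R/r)^\alpha$ on the full range $0<r\le R^{1/\theta}<R<1$, giving $\dim_{A,reg}^\theta(F)\le\beta$. Combining the two inequalities proves the assertion.

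The covering comparisons themselves are entirely routine; the closest thing to an obstacle is the bookkeeping just described, namely verifying at each substitution (replacing $r$ by $2^{-m}R$, or picking $m=m(r,R)$) that the new scales continue to satisfy $2^{-m}R\le R^{1/\theta}<R<1$. Each such check is a one-line inequality, but all of them are needed in order that the reductions between the three covering quantities be legitimate.
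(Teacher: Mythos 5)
Your proposal is correct and follows essentially the same route as the paper: both directions of (i) come from monotonicity of $r\mapsto N(B(x,R)\cap F,r)$ plus absorbing a factor $C_1^\alpha$, and (ii) comes from the standard two-sided comparison between arbitrary covers and dyadic covers of $Q(x,R)$, with the scale-admissibility checks you list. The only differences are cosmetic bookkeeping (you invoke the hypothesis at scale $R^{1/\theta}$ rather than $r/C_1$, and compare $N_d$ at level $m$ with $N$ at scale $2^{-m}R$ using a $2^n$ factor instead of the paper's $2^{-m+1}R$ and $3^n$), which if anything handles the level-$m$ admissibility slightly more cleanly.
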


\begin{proof}
	(i) Denote by $$A_\theta :=\left\{\alpha>0 \,:\, {\exists\,C>0\mbox{ s.t. } N(B(x,R) \cap F,r) \le C (R/r)^{\alpha} \atop \mbox{ for all $0<r\le R^{1/\theta}<R<1$ and all $x \in F$}} \right\}$$
	
	and by
	
	$$B_\theta :=\left\{\alpha>0 \,:\, {\exists\,C>0\mbox{ s.t. } N(B(x,R) \cap F,r) \le C (R/r)^{\alpha} \atop \mbox{ for all $0<r/C_1\le R^{1/\theta}<R<1$ and all $x \in F$}} \right\}.$$
	We know by the definition of the regularized Assouad spectrum that $\dim_{A,reg}^\theta(F)=\inf A_\theta$, so it is enough to show that $A_\theta=B_\theta$.
	
	Let $\alpha\in A_\theta$, $x \in F$ and $r, R>0$ with $0<r/C_1\le R^{1/\theta}<R<1$.
	Then $$N(B(x,R)\cap F, r) \leq N(B(x,R)\cap F, r/C_1) \leq  C \left(\frac{R}{r/C_1}\right)^{\alpha}=  C C_1^\alpha (R/r)^{\alpha}$$
	which implies that $\alpha\in B_\theta$. Hence, $A_\theta \subset B_\theta$.
	
	Since $C_1\geq 1$, for any $0<r\leq R^{1/\theta} \le R<1$ we have that $r/C_1 \le r$, which makes the inclusion $B_\theta \subset A_\theta$ trivial. Hence $A_\theta = B_\theta$.
	
	(ii) Denote by $$D_\theta :=\left\{\alpha>0 \,:\, {\exists\,C>0\mbox{ s.t. } N_d(B(x,R) \cap F,m) \le C 2^{m\alpha} \atop \mbox{ for all $m\in \N, R>0$ with $0<2^{-m}R\le R^{1/\theta} < R < 1$ and all $x \in F$}} \right\}.$$
	
	Similarly, it is enough to show that $A_\theta = D_\theta$. For this we will need the inequalities
	\begin{equation}\label{covineq}
		N(B(x,R)\cap F,2^{-m+1}R \sqrt{n}) \leq N_d(B(x,R)\cap F,m) \leq 3^n N(B(x,R)\cap F,2^{-m+1}R).
	\end{equation}
	
	Since cubes of side length $2^{-m+1}R$ are sets of diameter at most $2^{-m+1}R \sqrt{n}$, the left inequality is trivial. For the right inequality it is enough to notice that every set of diameter at most $2^{-m+1}R$ cannot possibly intersect more than $3^n$ axes-paralleled cubes of side length $2^{-m+1}R$.
	
	Let $\alpha\in A_\theta$, $x \in F$ and $m \in \N, R>0$ with $0<2^{-m+1}R\le R^{1/\theta} < R < 1$. Then $$N(B(x,R)\cap F, 2^{-m+1} R) \leq C \left( \frac{R}{2^{-m+1} R}\right)^\alpha$$ which by \eqref{covineq} implies that
	$$N_d(B(x,R)\cap F,m) \leq C 2^{-\alpha} 3^{n} 2^{m\alpha}$$ and, thus, $\alpha \in D_\theta$. Hence $A_\theta \subset D_\theta$.
	
	Let $\alpha \in D_\theta$, $x \in F$ and $r, R>0$ with $0<r\leq R^{1/\theta}<R<1$. Let $m \in \N$ be the smallest number for which
	$2^{-m+1} R \sqrt{n}  \leq r < 2^{-m+2}  R \sqrt{n}.$ Then
	$$ N(B(x,R) \cap F, r) \leq N(B(x,R) \cap F, 2^{-m+1}  R \sqrt{n})$$
	which by \eqref{covineq} implies that
	$$ N(B(x,R) \cap F, r) \leq N_d(B(x,R)\cap F, m) \leq C 2^{m \alpha}=C \left( \frac{R}{2^{-m+2} R \sqrt{n}} \right)^\alpha (2^2 \sqrt{n})^\alpha.$$
	Hence, $N(B(x,R) \cap F, r) \leq C (4 \sqrt{n})^\alpha (R/r)^\alpha$ which means that $\alpha \in A_\theta$. As a result, $D_\theta \subset A_\theta$ and the proof is complete.	
\end{proof}

\section{Quasiconformal classification of polynomial spirals}\label{sec:spirals-classification}

Recall from the introduction that $S_a$ denotes the polynomial spiral
$$
\{ x^{-a} e^{\bi x} \in \C : x>0 \}.
$$
Theorem \ref{th:spirals-classification} asserts that $S_a$ can be mapped to $S_b$ by a quasiconformal map $f$ of $\C$ if and only if $K_f \ge \tfrac{a}{b}$. We will prove this result as an application of Corollary \ref{cor:main2}. Let us first observe why other notions of dimension are insufficient for this purpose. Clearly, since $\dim_H(S_a) = 1$ for every $a>0$, Astala's result \eqref{eq:astala2} cannot be used to quasiconformally distinguish any pair of polynomial spirals.

As noted in the introduction, the estimate \eqref{eq:sob-bound} holds true for any $W^{1,p}_\loc(\Omega:\R^n)$ mapping $f$ (not necessarily quasiconformal) from a domain $\Omega \subset \R^n$ with $p>n$, and for any set $E \subset \Omega$ with $\dim_H E = \alpha \in (0,n)$.
%holds true. This is an easy covering argument using the Morrey--Sobolev inequality and the characterization of Hausdorff dimension using coverings by dyadic cubes. 
Kaufman \cite{Kaufman} proved the analogous statement for the box-counting dimension $\dim_B$. Using again the sharp exponent of Sobolev integrability for planar quasiconformal maps one concludes that if $f:\C\to\C$ is $K$-quasiconformal and $E\subset \C$ is bounded, then
$$
\frac1K \left( \frac1{\dim_B(E)} - \frac12 \right) \le \frac1{\dim_B(f(E))} - \frac12 \le K \left( \frac1{\dim_B(E)} - \frac12 \right) \, .
$$
One may try to use this estimate to answer the question about quasiconformal equivalence of polynomial spirals. Fraser \cite{fr:spirals} computed the box-counting dimensions of such spirals: for $a>0$,
$\dim_B(S_a) = \max \{ 2/(1+a), 1 \}$. It follows that $\tfrac1{\dim_B(S_a)}-\tfrac12 = \min\{\tfrac{a}{2},\tfrac12\}$. Thus if $f:\C\to\C$ is $K$-quasiconformal with $f(S_a) = S_b$, $a>b>0$, then
$$
\frac1{\dim_B(S_a)} - \frac12 \le K \left( \frac1{\dim_B(S_b)} - \frac12 \right)
$$
and so
$$
K \ge \frac{\min\{a,1\}}{\min\{b,1\}}.
$$
This proves Theorem \ref{th:spirals-classification}, but only in the case $0<b<a\le 1$. If $a>1$ then $\dim_B(S_a) = 1$ and the preceding lower bound for $K$ does not match the upper bound given by the radial stretch map. If $b>1$ then $\dim_B(S_a) = \dim_B(S_b) = 1$ and we obtain no nontrivial information about the dilatation.

To resolve the remaining case, we consider the Assouad spectrum. Fraser \cite{fr:spirals} also computed these quantities for the polynomial spirals. For $a>0$ and $0<\theta<1$,
$$
\dim_A^\theta(S_a) = \begin{cases} \min\left\{ \frac{2}{(1+a)(1-\theta)},2\right\}, & \mbox{if $0<a\le 1$,} \\
\min\left\{ 1+\frac{\theta}{a(1-\theta)},2\right\}, & \mbox{if $a\ge 1$.} \end{cases}
$$
Note that since these expressions are monotonically increasing as functions of $\theta$, they also agree with the respective values $\dim_{A,reg}^\theta(S_a)$ of the regularized Assouad spectrum. The critical parameter $\rho(S_a)$, as in \eqref{eq:rho-definition}, is $a/(1+a)$. Note that equality in \eqref{eq:rho-inequality} holds only when $0<a\le 1$. We have $\dim_{A}^\theta(S_a)<2$ if $\theta<\rho(S_a)$ and $\dim_{A}^\theta(S_a)=2$ if $\theta\ge\rho(S_a)$. It follows that the quasi-Assouad dimension (and hence also the Assouad dimension) of $S_a$ equals $2$ for all $a>0$, and so Assouad dimension cannot be used to distinguish polynomial spirals up to quasiconformal equivalence.

\begin{proof}[Proof of Theorem \ref{th:spirals-classification}]
As discussed above, it suffices to show that if $K<\tfrac{a}{b}$, $a>b>0$, then there does not exist a $K$-quasiconformal map $f$ of $\C$ with $f(S_a) = f_b$.

Suppose that such a map exists. Set $t=1/b$. Then $\theta(t) = 1/(1+t) = b/(1+b)$ so $\dim_A^{\theta(t)}(S_b) = 2$. On the other hand, $\theta(t/K) = K/(K+t) < a/(1+a)$ so $\dim_A^{\theta(t/K)}(S_a) < 2$. This leads to a contradiction with the conclusion of Corollary \ref{cor:main2}. Note that
$\tfrac{K}{K+1/b} < \tfrac{a}{1+a}$ if and only if $K < \tfrac{a}{b}$.
\end{proof}

\begin{remark}
In the preceding proof, distinguishing different polynomial spirals $S_a$ up to $K$-quasiconformal equivalence relies on an understanding of the behavior of the Assouad spectrum values $\dim_{A}^\theta(S_a)$ as a function of $\theta$, and more precisely, determining the threshold parameter $\rho(S_a)$ where the value of $\dim_{A}^\theta(S_a)$ reaches the dimension of the ambient space $\R^2$. The precise form of the upper bound for the regularized Assouad spectrum values of $f(E)$ in terms of the corresponding values for $E$ plays no role. It would be interesting to identify a situation in which the precise bounds in \eqref{eq:main2} feature in a quasiconformal classification problem.
\end{remark}

\section{Quasiconformal distortion of Assouad dimension and the Assouad spectrum}\label{sec:qc-dist-Assouad}

In this section, we prove Theorems \ref{th:main1} and \ref{th:main2}. In subsection \ref{subsec:Assouad-spectrum-proof} we prove Theorem \ref{th:main2} on the quasiconformal distortion of the Assouad spectrum. The proof of Theorem \ref{th:main1}, on the distortion of Assouad dimension, proceeds along similar lines. We present this proof in subsection \ref{subsec:Assouad-dimension-proof} in an abbreviated form, focusing on those aspects of the argument in subsection \ref{subsec:Assouad-spectrum-proof} which must be modified.

\subsection{Proof of Theorem \ref{th:main2}}\label{subsec:Assouad-spectrum-proof}
Recall that our goal here is to prove the dimension distortion estimates \eqref{eq:main2} for any $K$-quasiconformal map $f:\Omega \to \Omega'$ between domains in $\R^n$ and for compact sets $E \subset \Omega$. We begin by performing some preliminary reductions.

If $\Omega = \R^n$ then also $\Omega' = \R^n$. Using the bi-Lipschitz invariance of the Assouad spectrum (Proposition \ref{prop:Assouad-spectrum-facts}(1)(d)), and pre- and post-composing with suitable homotheties, we may assume without loss of generality that $E \subset Q_0$ and $f(E) \subset Q_0$, where
$$
Q_0 = Q\left(0,\tfrac{1}{5\sqrt{n}}\right) = [-\frac{1}{5\sqrt{n}},\frac{1}{5\sqrt{n}}]^n.
$$ 
This choice ensures that both $\diam(E)$ and $\diam(f(E))$ are strictly less than $\tfrac12$, which allows us to use Corollary \ref{cor:egg-yolk-2} in the subsequent discussion.

If $\Omega \subsetneq \R^n$ then also $\Omega' \subsetneq \R^n$. In this case we consider a suitable Whitney decomposition of $\Omega$. Let $\eta$ be a local quasisymmetric distortion function as in Proposition \ref{prop:egg-yolk} and let $c = \max\{\eta(1),3\}$. Choose $k \ge 1$ as in Corollary \ref{cor:egg-yolk} and select a Whitney decomposition $\cW = \{Q_i\}_{i\in I}$ for $\Omega$ satisfying \eqref{eq:whitney} for each $Q \in \cW$. Since $E$ is compact, it has nonempty intersection with only finitely many cubes in the Whitney decomposition of $\Omega$. In view of the finite stability of the Assouad spectrum (Proposition \ref{prop:Assouad-spectrum-facts}(1)(b)) we may without loss of generality assume that $E$ is contained in one such cube $Q \in \cW$. A further appeal to Corollary \ref{cor:egg-yolk} yields a cube $Q' \subset \Omega'$ so that $f(Q) \subset Q'$ and $\diam f(Q) \le \diam Q' \le \tfrac1c \dist(Q',\partial\Omega') \le \tfrac1c \dist(f(Q),\partial\Omega')$. Using again the bi-Lipschitz invariance of the Assouad spectrum, we may assume without loss of generality that $Q = Q' = Q_0$.

In conclusion, and regardless of which of the above two cases holds, we may assume with no loss of generality that both $E$ and $f(E)$ are contained in $Q_0$, that $f(Q_0) \subset Q_0$, and that
\begin{equation}\label{eq:Q0}
\diam Q_0 \le \min\{ \tfrac1k \dist(Q_0,\partial\Omega), \tfrac1c \dist(Q_0,\partial\Omega')\},
\end{equation}
where we interpret the right hand side as $+\infty$ if $\Omega = \Omega' = \R^n$. In particular, $Q_0 \subset \Omega$ and $Q_0 \subset \Omega'$. 

\vspace{0.1in}

We now begin the proof in earnest. It suffices to prove one of the two inequalities in \eqref{eq:main2}, as the other inequality follows by considering the inverse map $f^{-1}$. We prove the left hand inequality, which we rewrite in the form
$$
\dim_{A,reg}^{\theta(t)}(f(E)) \le \beta_0 := \frac{p_O^{\RH}(n,K)\alpha_0}{p_O^{\RH}(n,K)-n+\alpha_0}, \qquad \alpha_0 = \dim_{A,reg}^{\theta(t/K)}(E).
$$
This follows if we prove
$$
\dim_{A,reg}^{\theta(t)}(f(E)) \le \beta := \frac{p\alpha}{p-n+\alpha}
$$
for all $p\in (n,p_O^{\RH}(n,K))$ and $\alpha \in (\alpha_0, n]$.

Fix $p$, $\alpha$ and $\beta$ as above. Let $y \in f(E)$ and $0<R'< 1$. Note that these assumptions imply that $B(y,R') \subset \Omega'$. Indeed, if $z \in B(y,R') \setminus \Omega'$ then
$$
1>R' \ge |y-z| \ge \dist(y,\partial\Omega') \ge \dist(Q_0,\partial\Omega') \ge c \diam Q_0 \ge \tfrac65.
$$
In fact, we can replace any ball centered inside $E$ or $f(E)$ by one with the same center and radius at most $\diam E$ or $\diam f(E)$ that has the same intersection with $E$ or $f(E)$ respectively. By further reducing the diameter of $Q_0$ if necessary, we can assume that all such balls lie further enough from the boundaries $\partial\Omega$ and $\partial\Omega'$ that Corollary 2.3 can be applied to them.\\
We will find a constant $C_1'>0$ and we will cover $f(E) \cap B(y,R')$ by sets of diameter at most $r_m'$ for all $m$ so that $r_m' \le C_1' (R')^{1/\theta(t)}$, where $(r_m')$ is a decreasing sequence with $r_m' \searrow 0$ and $\tfrac{r_{m+1}'}{r_m'} \ge c_0$ for some $c_0 \in (0,1)$ independent of $m$. This covering leads to an estimate for $N(B(y,R') \cap f(E),r_m')$ from above by $C(R'/r_m')^\beta$ for some fixed constant $C$; in view of Proposition \ref{prop:Assouad-spectrum-technical-proposition}(i) we conclude that $\dim_{A,reg}^{\theta(t)}(f(E)) \le \beta$, which finishes the proof.

For the given choice of $y$ and $R'<1$, let $B = B(y,R')$. We apply Corollary \ref{cor:egg-yolk-2} to the inverse map $g = f^{-1}$, which yields a ball $B(x,R)\subset \Omega$ with $x = g(y)$, $R<1$, and $g(B(y,R') \cap f(E)) \subset B(x,R) \cap E$. If we cover $B(x,R) \cap E$ with sets from a covering $\cU$, then the set $B(y,R') \cap f(E) \subset f(B(x,R)) \cap f(E)$ will be covered by the images of the elements in $\cU$.

We consider cubes obtained via dyadic decomposition of $Q(x,R)$. The side lengths of such cubes assume values $2^{-m}R$ for values $m \ge -1$. Let $m_0 = m_0(R)$ be the unique positive integer so that
$$
2^{-m_0}R \le R^{1/\theta(t/K)} = R^{1+t/K} < 2^{-m_0+1}R,
$$
and let
\begin{eqnarray*}
	r_m &:= 2^{-m}R, \\
	r_m' & := 2^{-m\alpha/\beta}R'
\end{eqnarray*}
for $m \ge m_0$. Since $\alpha>\alpha_0$, for any such choice of $m$ we need at most
	\begin{equation}\label{1}
		C_0\left(\frac{R}{r_m}\right)^\alpha = C_0 2^{m\alpha}
	\end{equation}
dyadic cubes of side length $2^{-m}R$ to cover $E \cap B(x,R)$; see Proposition \ref{prop:Assouad-spectrum-technical-proposition}(ii).
	
Recalling the choice of $R$ in \eqref{eq:R-R'} and using standard local H\"older continuity estimates for $f$ and $f^{-1}$ (see, for instance, \cite[Theorem 7.7.1]{im:gft}), we conclude that
$$
R \ge (R')^K/C_1
$$
for some $C_1>0$. It follows that there exists $C_2$ depending only on $C_1$, $t$, and $K$, so that if we denote by $m_0' = m_0'(R')$ the unique integer so that
\begin{equation}\label{C1ineq}
r'_{m'_0}=2^{-m_0'\alpha/\beta}R' \le (1/C_2) (R')^{1/\theta(t)} = (1/C_2) (R')^{1+t} < 2^{(-m_0'+1)\alpha/\beta}R'=r'_{m'_0-1},
\end{equation}
then
$$
m_0' \ge m_0.
$$
To see this, note that since $\alpha \le \beta$ we have
$$
2^{(-m_0+1)\alpha/\beta}R' \ge 2^{-m_0+1}R' > R^{t/K}R' \ge C_1^{-t/K} (R')^{1+t}
$$
so \eqref{C1ineq} holds with $C_2 = C_1^{t/K}$, which means that we have $r'_m \leq (1/C_2) (R')^{1/\theta(t)}$ for all $m\geq m'_0(R')$.
	
Fix an integer $m \ge m_0'$. Following the terminology in \cite{Kaufman}, we call a dyadic cube $Q$ \textbf{minor} if $\diam f(Q) \leq r'_m$ and \textbf{major} otherwise. The distinction between major and minor cubes depends on the choice of the level $m$, however, it applies to the dyadic cubes of all levels. If the cube $Q(x,R)$ is minor, then all dyadic sub-cubes of level $m$ used to cover $E \cap B(x,R)$ will in fact be minor, in which case \eqref{1} bounds the number of sets of diameter at most $r'_m$ needed to cover $f(E) \cap B(y,R')$ by $C_0 2^{m\alpha}$. If $Q(x,R)$ is major then, since $f$ is uniformly continuous, we can subdivide any dyadic cube $Q$ into dyadic minor subcubes of varying sizes, where all dyadic minor subcubes in question have the property that their dyadic parent is major. Let us call such a cube a \textbf{critical (minor) cube}. We will estimate the number of critical cubes by counting their major parents.

\begin{lemma}\label{Le2}
The total number of major cubes of side length at most $2^{-m}R$ contained in $Q(x,R)$ is bounded above by $C 2^{m \alpha}$, where the constant $C$ depends only on $K$ and $n$.
\end{lemma}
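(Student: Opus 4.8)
The plan is to estimate, for each dyadic scale $2^{-j}R$ with $j\ge m$, the number of major cubes of that side length via the Morrey--Sobolev inequality, and then to control the sum over all $j\ge m$ by a single application of the reverse H\"older inequality \eqref{eq:pnK} on the cube $Q(x,R)$ itself, after which the exponent will collapse to exactly $\alpha$. The one genuinely delicate ingredient will be a scale-matching estimate $|f(2Q(x,R))|\le C(K,n)(R')^n$.

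First I would record what the reductions made at the start of the proof give here. Since $p<p_O^{\RH}(n,K)\le p_O^{\Sob}(n,K)$ by \eqref{eq:pRHSobnK}, the map $f$ lies in $W^{1,p}_\loc(\Omega:\R^n)$, so $\int_{Q(x,R)}|Df|^p<\infty$; and, after shrinking $Q_0$ if necessary, the cubes $Q(x,R)$ and $2Q(x,R)$ lie in $\Omega$, a cube containing $B(x,2\sqrt n\,R)$ still satisfies the hypotheses of Proposition \ref{prop:egg-yolk}, and the hypotheses of the reverse H\"older inequality \eqref{eq:pnK} are met on $Q(x,R)$. Next I would establish the scale-matching bound. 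Applying Corollary \ref{cor:egg-yolk} to $g=f^{-1}$ and the ball $B(y,R')$ — the step internal to the proof of Corollary \ref{cor:egg-yolk-2} that was used to produce $B(x,R)$ — yields a ball $\widetilde B\supset B(x,2R)$ with $f(\widetilde B)\subset B(y,kR')$, hence $f(B(x,2R))\subset B(y,kR')$. Applying Proposition \ref{prop:egg-yolk} to pass from radius $2R$ to radius $2\sqrt n\,R$ (a fixed dimensional ratio) gives $\diam f(B(x,2\sqrt n\,R))\le C(K,n)\,\diam f(B(x,2R))\le C(K,n)\,R'$, and since $2Q(x,R)\subset B(x,2\sqrt n\,R)$ is contained in a ball of radius equal to its diameter, this gives
$$
|f(2Q(x,R))|\le C_0(K,n)\,(R')^n.
$$

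Then I would fix $j\ge m$, let $\mathcal M_j$ be the collection of major cubes of side length $s_j:=2^{-j}R$, and set $N_j:=\#\mathcal M_j$. For $Q\in\mathcal M_j$, the Morrey--Sobolev embedding (this is where $p>n$ enters) gives $\diam f(Q)\le C_1(n,p)\,s_j^{1-n/p}\big(\int_Q|Df|^p\big)^{1/p}$, while $\diam f(Q)>r'_m$ by the definition of a major cube; hence $\int_Q|Df|^p\ge C_1(n,p)^{-p}\,s_j^{\,n-p}(r'_m)^p$. The cubes of $\mathcal M_j$ have pairwise disjoint interiors and lie in $Q(x,R)$, so summing over $\mathcal M_j$ I get $N_j\le C_1(n,p)^p\,(r'_m)^{-p}\,s_j^{\,p-n}\int_{Q(x,R)}|Df|^p$. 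Summing over $j\ge m$ and using that $\sum_{j\ge m}s_j^{\,p-n}$ is a convergent geometric series (because $p>n$) equal to $C_2(n,p)\,(2^{-m}R)^{p-n}$, I obtain
$$
\sum_{j\ge m}N_j\le C_3(n,p)\,(r'_m)^{-p}\,(2^{-m}R)^{p-n}\int_{Q(x,R)}|Df|^p.
$$
Finally I would bound the integral on the right using the reverse H\"older inequality \eqref{eq:pnK} on $Q(x,R)$, together with $|Df|^n\le K\det Df$ and the area formula $\int_{2Q(x,R)}\det Df=|f(2Q(x,R))|$; this gives $\int_{Q(x,R)}|Df|^p\le C_4(K,n,p)\,R^{\,n-p}|f(2Q(x,R))|^{p/n}$, and inserting the scale-matching bound yields $\int_{Q(x,R)}|Df|^p\le C_5(K,n,p)\,R^{\,n-p}(R')^p$. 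Combining this with the previous display, the factors $R^{p-n}$ and $R^{n-p}$ cancel, and, since $r'_m=2^{-m\alpha/\beta}R'$, the factors $(R')^{-p}$ and $(R')^p$ cancel as well, leaving
$$
\sum_{j\ge m}N_j\le C_6(K,n,p)\,2^{\,mp\alpha/\beta}\,2^{-m(p-n)}.
$$
Since $\beta=p\alpha/(p-n+\alpha)$ we have $p\alpha/\beta=p-n+\alpha$, so the exponent equals $(p-n+\alpha)-(p-n)=\alpha$; thus $\sum_{j\ge m}N_j\le C_6(K,n,p)\,2^{m\alpha}$, which is the assertion of the lemma, with $C$ depending only on $n$, $K$ (and the fixed exponent $p$).

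The step I expect to be the main obstacle is the scale-matching estimate $|f(2Q(x,R))|\le C_0(K,n)(R')^n$: it is the only place where the expansion of $f$ on a full neighbourhood of $Q(x,R)$, rather than merely on $E$, must be controlled, and it cannot be dispensed with — replacing it by the crude bound $|f(2Q(x,R))|\le|f(Q_0)|\lesssim 1$ would leave a spurious factor $(R')^{-p}$ and destroy scale invariance. Everything else — the Morrey inequality on individual cubes, the reverse H\"older inequality, the geometric summation over scales, and the exponent bookkeeping — is routine.
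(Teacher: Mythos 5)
The proposal is correct and follows essentially the same argument as the paper: Morrey--Sobolev on each major cube, summation of the resulting geometric series over dyadic scales $j\ge m$, the reverse H\"older inequality on $Q(x,R)$, and the egg-yolk inclusions to bound $|f(2Q(x,R))|$ by $C(K,n)(R')^n$, after which the exponents collapse to $\alpha$ via $p\alpha/\beta=p-n+\alpha$. The only difference is that you carry out the scale-matching step $|f(2Q(x,R))|\lesssim (R')^n$ in more explicit detail than the paper, which disposes of it with a brief appeal to \eqref{eq:egg-yolk}.
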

	
\begin{proof}
For fixed $j \ge 0$, let $M(j)$ be the number of major cubes in $Q(x,R)$ of side length $2^{-j}R$. Denote by $Q_i^j$ a typical such cube. The Morrey--Sobolev inequality on $Q^i_j$ takes the form
$$
\diam f(Q_i^j) \leq C_3 (\diam (Q_i^j))^{1-n/p} \left( \int_{Q_i^j} |Df|^p \right)^{1/p}.
$$
Since $Q_i^j$ is major, this implies that
$$
2^{-\frac{m\alpha p}{\beta}}(R')^p \leq C_4 \, 2^{-j (p-n)} R^{p-n} \int_{Q_i^j}|Df|^p
$$
from which it follows that
\begin{equation*}\begin{split}
2^{-\frac{m\alpha p}{\beta}}(R')^p M(j)
&\leq C_4 \, 2^{-j(p-n)} R^{p-n} \int_{\bigcup\limits_{i} Q_i^j}|Df|^p \\
&\leq C_4 \, 2^{-j(p-n)} R^{p-n} \int_{Q(x,R)}|Df|^p \,.
\end{split}\end{equation*}
Summing over all $j \geq m$ yields
$$
2^{-\frac{m\alpha p}{\beta}}(R')^p \sum_{j=m}^{\infty} M(j) \leq C_5 \, 2^{-m(p-n)} R^{p-n} \int_{Q(x,R)} |Df|^p.
$$
Hence
\begin{equation}\label{3}
\sum_{j=m}^{\infty} M(j) \leq C_5 \, 2^{\frac{m\alpha p}{\beta}-m(p-n)} \, (R')^{-p}R^{p-n} \int_{Q(x,R)}|Df|^p \,.
\end{equation}
Since the doubled cube $Q(x,2R)$ can also be assumed to be contained in $Q_0$ by performing similar reductions on the set $E$ as in the beginning of the proof, \eqref{eq:Q0} implies that $\diam f(Q(x,2R)) \le \dist(f(Q(x,2R)),\partial\Omega')$; since $p<p^{\RH}_O(n,K)$ we conclude that the reverse H\"older inequality \eqref{eq:pnK} is satisfied, i.e.
$$
\frac{1}{|Q(x,R)|^{1/p}} \left( \int_{Q(x,R)} |Df|^p \right)^{1/p} \leq \frac{C}{|Q(x,2R)|^{1/n}}\left( \int_{Q(x,2R)} |Df|^n \right)^{1/n} \, .
$$
Using \eqref{eq:egg-yolk} we bound the integral on the right hand side of \eqref{3} (up to a global constant) by
$$
\frac{R^n}{R^p} |f(Q(x,2R))|^{p/n} \leq R^{n-p} |Q(y,R')|^{p/n} \leq C(n,p) R^{n-p} (R')^{p} \, .
$$
Hence, by the definition of $\beta$, we obtain
$$
\sum_{j=m}^{\infty} M(j) \leq C_6 \, 2^{m\alpha} \,.
$$
This completes the proof of the lemma.
\end{proof}

We now count the number of critical cubes in the case where $Q(x,R)$ is major. Any such critical cube is one of $2^n$ siblings of a parent major cube. Hence, the number of critical cubes which we will obtain is at most
\begin{equation}\label{2}
\sum_{j=m}^{\infty} 2^n M(j) \leq C_6 \, 2^{m\alpha+n} \le C_7 2^{m\alpha} \, .
\end{equation}
The sub-collection of these critical cubes whose image under $f$ meets $f(E)$ forms a suitable cover of $f(E)$ by sets of diameter at most $r_m'$. By previous comments, its cardinality is at most
$$
C_7 2^{m\alpha} = C_7 \left( \frac{R'}{r_m'} \right)^{\beta},
$$
where we recall that $\beta = p\alpha/(p-n+\alpha)$. It follows that regardless of whether $Q(x,R)$ is minor or major,
$$
N(B(y,R')\cap f(E), r_m') \leq C_7 \left( \frac{R'}{r_m'} \right)^{\beta}
$$ 
which implies
$$
\dim_{A, reg}^{\theta(t)}(f(E)) \le \beta =\frac{p\alpha}{p-n+\alpha}.
$$
This concludes the proof of Theorem \ref{th:main2}. \qed

\subsection{Proof of Theorem \ref{th:main1}}\label{subsec:Assouad-dimension-proof}
We first observe that similar estimates hold for quasiconformal distortion of the quasi-Assouad dimension $\dim_{qA}(E)$. Specifically, if $f:\Omega \to \Omega'$ is a $K$-quasiconformal mapping between domains in $\R^n$, and $E$ is a compact subset of $\Omega$, then
\begin{equation*}\begin{split}
&\left( 1 - \frac{n}{p_O^{\RH}(n,K)} \right) \left( \frac1{\dim_{qA}(E)} - \frac1n \right) \\
& \qquad \le \frac1{\dim_{qA}(f(E))} - \frac1n \le \\
& \left( 1 - \frac{n}{p_O^{\RH}(n,K^{n-1})} \right)^{-1} \left( \frac1{\dim_{qA}(E)} - \frac1n \right) \,.
\end{split}\end{equation*}
This is an immediate consequence of Theorem \ref{th:main2}, obtained by letting $t \rightarrow 0^+ $, which leads to $\theta \nearrow 1$, and using the continuity of $\theta \mapsto \dim_{A,reg}^\theta(E)$ as $\theta \to 1^-$.

The quasi-Assouad dimension $\dim_{qA}(E)$ and the Assouad dimension $\dim_A(E)$ do not agree in general, so Theorem \ref{th:main1} requires an additional argument. Nevertheless, the proof of Theorem \ref{th:main1} is substantially similar to that of Theorem \ref{th:main2} given in the previous subsection.

\begin{proof}[Proof of Theorem \ref{th:main1}]
As in the proof of Theorem \ref{th:main2}, we begin with a series of reductions. Suppose that $\Omega = \Omega' = \R^n$ and that $E$ is unbounded. Since the Assouad dimension of a set is unchanged upon passing to the closure, we may assume without loss of generality that $E$ is closed. If $E = \R^n$ then also $f(E) = \R^n$ and the result is trivial. If $E \subsetneq \R^n$, then choose an open ball in the complement of $E$. Conformal inversion in the boundary of this ball preserves the Assouad dimension of sets, and maps $E$ to a compact set. Moreover, precomposition by a conformal map does not alter the dilatation of the original map $f$. Thus it suffices to assume that $E$ is compact.

We now perform additional reductions as in the beginning of the proof of Theorem \ref{th:main2}, and assume without loss of generality that both $E$ and $f(E)$ are contained in $Q_0$ and that \eqref{eq:Q0} holds. As in the proof of Theorem \ref{th:main2}, it suffices to prove the left hand inequality, as the other inequality follows by considering the inverse map $f^{-1}$. We rewrite the left hand inequality in the form
$$
\dim_{A}(f(E)) \le \beta_0 := \frac{p_O^{\RH}(n,K)\alpha_0}{p_O^{\RH}(n,K)-n+\alpha_0}, \qquad \alpha_0 = \dim_{A}(E).
$$
This follows if we prove
$$
\dim_{A}(f(E)) \le \beta := \frac{p\alpha}{p-n+\alpha}
$$
for all $p<p_O^{\RH}(n,K)$ and $\alpha>\alpha_0$.

Fix such $p$ and $\alpha$, and let $y \in f(E)$ and $R'>0$. As before, applying Corollary \ref{cor:egg-yolk} on the inverse map $f^{-1}$ for the ball $B(y,R')$ yields a ball $B(x,R)$, $x = f^{-1}(y)$ with $f^{-1}(B(y,R'))\subset B(x,R)$. Again, we will cover $f(E) \cap B(y,R')$ by sets of diameter at most $r_m' = 2^{-m\alpha/\beta}R'$ for all $m \ge 0$; this suffices for the desired conclusion. Note that here we allow all nonnegative $m$ in the set of scales, and do not impose an $R$-dependent lower bound on the allowable scales. For $m \ge 0$ we define $r_m := 2^{-m}R$ as before. Since $\alpha>\alpha_0$, for each such $m$ we need at most
\begin{equation}\label{A1}
	C\left(\frac{R}{r_m}\right)^\alpha = C 2^{m\alpha}
\end{equation}
dyadic cubes of side length $2^{-m}R$ to cover $E \cap B(x,R)$. The remainder of the proof follows by Lemma \ref{Le2} in the same fashion as the proof of Theorem \ref{th:main2}.
\end{proof}

\begin{remark}
If $f:\Omega\to \Omega'$ is $K$-quasiconformal and $p<p_I^{\RH}(n,K)$ then $f^{-1}\in W^{1,p}_\loc(\Omega:\R^n)$ and hence satisfies the Morrey-Sobolev inequality with exponent $p$ as in Lemma \ref{Le2}. By following the proof of Theorems \ref{th:main1} and \ref{th:main2} for $f^{-1}$ and $p<p_I^{\RH}(n,K)$, we can prove the inequalities 
	\begin{equation}\label{eq:impmain2}\begin{split}
		\small
		&\left( 1 - \frac{n}{p_O^{\RH}(n,K)} \right) \left( \frac1{\dim_{A,reg}^{\theta(t/K)}(E)} - \frac1n \right) \\
		& \qquad \le \frac1{\dim_{A,reg}^{\theta(t)}(f(E))} - \frac1n \le 
		\\
		&\left( 1 - \frac{n}{p_I^{\RH}(n,K)} \right)^{-1} \left( \frac1{\dim_{A,reg}^{\theta(Kt)}(E)} - \frac1n \right)
	\end{split}\end{equation}and
\begin{equation}\label{eq:impmain1}\small
	\left( 1 - \frac{n}{p_O^{\RH}(n,K)} \right) \left( \frac1{\dim_A E} - \frac1n \right) \le \frac1{\dim_A f(E)} - \frac1n \le \left( 1 - \frac{n}{p_I^{\RH}(n,K)} \right)^{-1} \left( \frac1{\dim_A E} - \frac1n \right)
\end{equation}\normalsize 
respectively. Observe that $p_O^{\RH}(n,K^{n-1})\leq p_I^{\RH}(n,K)$, so \eqref{eq:impmain2} and \eqref{eq:impmain1} are indeed improvements of \eqref{eq:main2} and \eqref{eq:main1} respectively.
\end{remark}

%By calling a dyadic cube $Q \subset E$ \textbf{minor} if $\diam f(Q) \leq r'_m$ and \textbf{major} otherwise, we can subdivide any dyadic cube $Q$ of side %$s(Q)$ enough times into $2^{nk}$ subcubes of side $s(Q)R/2^k$ so that all the resulting cubes are minor, by uniform continuity of $f$. Then, by Lemma %\ref{Le2} we have that there is a constant $C'>0$ such that the total number of minor cubes that will emerge by subdividing all the major ones until there are %no major ones left will be bounded above by $C'2^{m \alpha}$. Combining this with \eqref{A1} we get that
%$$
%N(B(y,R')\cap f(E), r'_m) \leq C_0 \, 2^{m \alpha}=C_0 \, \left( \frac{R'}{r'_m} \right)^\beta
%$$
%for $C_0:=\max\{C n^{-\alpha/2}, 2^nC'\}$.

%Hence,
%$$
%\dim_{A}(f(E)) \le \frac{p\alpha}{p-n+\alpha}
%$$
%and the conclusion follows by letting $\alpha \searrow \dim_{A}(E)$ and $p\nearrow p(K)$. \hfill \qed

%\vspace{0.1in}

Fraser and Yu \cite{fy:assouad-spectrum} (see also Lemma 3.4.13 in \cite{Fraser2020}) studied the distortion of the Assouad spectrum by bi-H\"older homeomorphisms. Since quasiconformal maps are locally bi-H\"older, it is instructive to consider the relationship between Theorem \ref{th:main2} and the results of \cite{fy:assouad-spectrum}.

Recall that a homeomorphism $f:X\to Y$ between metric spaces is said to be {\it $(\alpha,\beta)$-bi-H\"older}, for $0<\alpha\le 1\le\beta <\infty$, if there exists a constant $C>0$ so that
$$
C^{-1}d(x,x')^\beta \le d(f(x),f(x')) \le C d(x,x')^\alpha \qquad \forall\,x,x' \in X.
$$
If $\alpha < \beta$, it is clear that $X$ must be a bounded space in order for a $(\alpha,\beta)$-bi-H\"older homeomorphism from $X$ to exist. According to \cite[Lemma 3.4.13]{Fraser2020},\footnote{\eqref{eq:biHolder} is stated in \cite{Fraser2020} for unregularized Assouad spectrum values, but easily transfers to the regularized version.} if $X \subset \R^n$ is bounded, $f:X \to \R^n$ is a $(\alpha,\beta)$-bi-H\"older homeomorphism, and $0<\theta<\alpha/\beta$, then
\begin{equation}\label{eq:biHolder}
\frac{1-\beta\theta/\alpha}{\beta(1-\theta)} \dim_{A,reg}^{\beta\theta/\alpha}(X) \le \dim_{A,reg}^\theta(f(X)) \le \frac{1-\alpha\theta/\beta}{\alpha(1-\theta)} \dim_{A,reg}^{\alpha\theta/\beta}(X).
\end{equation}
Now every $K$-quasiconformal map in $\C$ is locally $(\tfrac1K,K)$-bi-H\"older continuous, see e.g.\ \cite[Corollary 3.10.3]{aim:2qc}. It follows from \eqref{eq:biHolder} that if $f:\C\to\C$ is $K$-quasiconformal, $E\subset\C$ is bounded, and $0<\theta<1/K^2$, then
\begin{equation}\label{eq:biHolder2}
\dim_{A,reg}^\theta(f(E)) \le K\, \frac{1-\theta/K^2}{1-\theta} \dim_{A,reg}^{\theta/K^2}(E).
\end{equation}
On the other hand, Corollary \ref{cor:main2} implies that if $t>0$ then
\begin{equation}\label{eq:ours}
\dim_{A,reg}^{\theta(t)}(f(E)) \le K \frac{\dim_{A,reg}^{\theta(t/K)}(E)}{1+\tfrac{K-1}{2}\dim_{A,reg}^{\theta(t/K)}(E)} \,.
\end{equation}
We conclude this section with the following observation, which indicates the range of Assouad spectrum parameters for which \eqref{eq:ours} improves upon \eqref{eq:biHolder2}.

\begin{proposition}\label{prop:final}
Let $d:= \dim_{A,reg}^{\theta(t/K)}(E)$. If $\theta(t)<1/K^2$ and $\theta(t) \le d/2$, then
$$
\frac{d}{1+\tfrac{K-1}{2}d} \le \frac{1-\theta(t)/K^2}{1-\theta(t)} \dim_{A,reg}^{\theta(t)/K^2}(E).
$$
\end{proposition}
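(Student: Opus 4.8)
The plan is to reduce the inequality to an elementary one between $d$ and $\theta(t)$, the bridge being a monotonicity property of the regularized Assouad spectrum that strengthens Proposition~\ref{prop:Assouad-spectrum-facts}(5).

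\emph{Key claim:} for every $F\subset\R^n$ and all $0<\theta_0\le\theta_1<1$,
$$(1-\theta_1)\,\dim_{A,reg}^{\theta_1}(F)\ \le\ (1-\theta_0)\,\dim_{A,reg}^{\theta_0}(F),$$
i.e.\ $\theta\mapsto(1-\theta)\dim_{A,reg}^\theta(F)$ is nonincreasing (the limiting case $\theta_0\to0^+$ is Proposition~\ref{prop:Assouad-spectrum-facts}(5)). To prove it, fix $\alpha>\dim_{A,reg}^{\theta_0}(F)$ and $C>0$ with $N(B(x,R)\cap F,r)\le C(R/r)^\alpha$ for all $x\in F$ and $0<r\le R^{1/\theta_0}<R<1$. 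Let $0<r\le R^{1/\theta_1}<R<1$. If $r\le R^{1/\theta_0}$, the hypothesis applies directly, and since $R/r>1$ and $(1-\theta_0)/(1-\theta_1)\ge1$ we get $N(B(x,R)\cap F,r)\le C(R/r)^{\alpha(1-\theta_0)/(1-\theta_1)}$. If instead $r>R^{1/\theta_0}$, then $r^{\theta_0}>R$, so $B(x,R)\cap F\subset B(x,r^{\theta_0})\cap F$; the pair $(r,r^{\theta_0})$ is admissible for $\dim_{A,reg}^{\theta_0}$ (indeed $r=(r^{\theta_0})^{1/\theta_0}$ and $r<r^{\theta_0}<1$), hence
$$N(B(x,R)\cap F,r)\ \le\ N(B(x,r^{\theta_0})\cap F,r)\ \le\ C\,(r^{\theta_0}/r)^{\alpha}\ =\ C\,r^{-\alpha(1-\theta_0)},$$
and $r\le R^{1/\theta_1}$ gives $R/r\ge r^{-(1-\theta_1)}>1$, so again $N(B(x,R)\cap F,r)\le C(R/r)^{\alpha(1-\theta_0)/(1-\theta_1)}$. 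Thus $\dim_{A,reg}^{\theta_1}(F)\le\frac{1-\theta_0}{1-\theta_1}\alpha$, and letting $\alpha\downarrow\dim_{A,reg}^{\theta_0}(F)$ proves the claim.

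Applying the claim with $F=E$, $\theta_0=\theta(t)/K^2$, $\theta_1=\theta(t/K)$ (noting $\theta_0\le\theta_1$, since $K+t\le K^3(1+t)$ for $K\ge1$ and $t>0$) yields
$$\dim_{A,reg}^{\theta(t)/K^2}(E)\ \ge\ \frac{1-\theta(t/K)}{1-\theta(t)/K^2}\,d.$$
Multiplying by $\frac{1-\theta(t)/K^2}{1-\theta(t)}$, it suffices to prove $\frac{1-\theta(t/K)}{1-\theta(t)}\,d\ge\frac{d}{1+\tfrac{K-1}{2}d}$. Since $d>0$ under the hypotheses (as $d\ge 2\theta(t)>0$) and, taking $K>1$ (the case $K=1$ being trivial), this is equivalent to $(1-\theta(t/K))(1+\tfrac{K-1}{2}d)\ge1-\theta(t)$. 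Now $1-\theta(t)=\frac{t}{1+t}$ and $1-\theta(t/K)=\frac{t}{K+t}$, so the inequality reads $(1+t)(1+\tfrac{K-1}{2}d)\ge K+t$, i.e.\ $(1+t)\tfrac{K-1}{2}d\ge K-1$, i.e.\ $d\ge\frac{2}{1+t}=2\theta(t)$ --- which is exactly the hypothesis $\theta(t)\le d/2$. (The hypothesis $\theta(t)<1/K^2$ is not used here; it is present only because the bound \eqref{eq:biHolder2} that the proposition is compared against requires it.)

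The one substantive point is the key claim, specifically the case $r>R^{1/\theta_0}$: the scales $r\in(R^{1/\theta_0},R^{1/\theta_1}]$ are not controlled by $\dim_{A,reg}^{\theta_0}$ relative to the ball $B(x,R)$, and the fix is to replace $B(x,R)$ by the larger ball $B(x,r^{\theta_0})$, for which the $\theta_0$-covering bound is available. Everything following the claim is routine algebra with the explicit form of $\theta(t)$.
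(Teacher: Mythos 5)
Your proof is correct and follows essentially the same route as the paper: both reduce the stated inequality, via the monotonicity of $\theta\mapsto(1-\theta)\dim_{A,reg}^\theta(E)$ applied to the pair $\theta(t)/K^2\le\theta(t/K)$, to the elementary inequality $1/(1+\tfrac{K-1}{2}d)\le(1+t)/(K+t)$, which is equivalent to $\theta(t)\le d/2$. The only difference is that you prove the monotonicity claim from scratch (correctly), whereas the paper simply cites it as Theorem 3.3.1 of \cite{Fraser2020}.
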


The inequality $1/(1+t) \le d/2$ is an implicit bound for $t$, since $d$ is also a function of $t$. However, if $\ovdimB(E)>0$ then it suffices to assume $\theta(t) \le \min\{1/K^2,\ovdimB(E)/2\}$.

\begin{proof}[Proof of Proposition \ref{prop:final}]
Let $\theta_1 = \theta(t)/K^2$ and $\theta_2 = \theta(t/K)$. Then $d = \dim_{A,reg}^{\theta_2}(E)$ and the conclusion reads
\begin{equation}\label{eq:prop-conclusion}
\frac{d}{1+\tfrac{K-1}{2}d} \le \frac{1-\theta_1}{1-\theta(t)} \dim_{A,reg}^{\theta_1}(E).
\end{equation}
Since $\theta_1 \le \theta_2$, Theorem 3.3.1 of \cite{Fraser2020} implies that
$$
\dim_{A,reg}^{\theta_1}(E) \ge \frac{1-\theta_2}{1-\theta_1} \dim_{A,reg}^{\theta_2}(E)
$$
and so \eqref{eq:prop-conclusion} is implied by
\begin{equation}\label{eq:prop-conclusion2}
\frac{d}{1+\tfrac{K-1}{2}d} \le \frac{1-\theta_2}{1-\theta(t)}d.
\end{equation}
Since $1-\theta_2=\tfrac{t}{K+t}$ and $1-\theta(t)=\tfrac{t}{1+t}$, \eqref{eq:prop-conclusion2} reads $1 / (1+\tfrac{K-1}{2}d) \le (1+t)/(K+t)$, 
%\begin{equation*}
%\label{eq:prop-conclusion3}
%\frac{1}{1+\tfrac{K-1}{2}d} \le \frac{1+t}{K+t},
%\end{equation*}
which is equivalent to the assumption $\theta(t) \le d/2$.
\end{proof}

\begin{remark}
Recall from the introduction the inequalities
$$
p_O^{\RH}(n,K) \le p_O^{\Sob}(n,K) \le \frac{nK}{K-1}
$$
valid for all $n \ge 3$ and $K \ge 1$. Iwaniec and Martin \cite{im:gft} have shown that for any $n \ge 3$ there exists a constant $\lambda = \lambda(n) \ge 1$ so that
\begin{equation}\label{eq:pnK-lower-bound}
p_O^{\Sob}(n,K) \ge \frac{n\lambda K}{\lambda K - 1}.
\end{equation}
The value of $\lambda(n)$ obtained in \cite{im:gft} is the smallest possible constant $\lambda \ge 1$ which makes the inequality
\begin{equation}\label{eq:IMconj}
\left| \int |Df|^{p-n} \, (\det Df) \right| \le \lambda\left|1-\frac{n}{p}\right| \int |Df|^p
\end{equation}
valid for distributional maps $f:\R^n \to \R^n$ with $L^p$ differential $Df:\R^n \to \R^{n\times n}$, and a stronger conjecture (also due to Iwaniec and Martin) is that \eqref{eq:IMconj} holds with $\lambda=1$ for all such maps $f$. 

In fact, a closer analysis of the proof of \cite[Theorem 14.4.1]{im:gft} reveals that
\begin{equation}\label{eq:pnK-lower-bound2}
p_O^{\RH}(n,K) \ge \frac{n\lambda K}{\lambda K - 1}.
\end{equation}
A more precise version of \eqref{eq:gv2} follows. Theorem 17.4.1 of \cite{im:gft} asserts that if $f:\Omega \to \Omega'$ is quasiconformal between domains in $\R^n$ and $E \subset \Omega$ is closed subset, then
\begin{equation}\label{eq:lambda-estimates}
\frac1{K_O(f)\lambda(n)} \left( \frac1{\dim_H(E)}-\frac1n \right) \le \frac1{\dim_H(f(E))} - \frac1n \le \lambda(n) {K_I(f)} \left( \frac1{\dim_H(E)} - \frac1n \right) \, .
\end{equation} 
\end{remark}
We obtain analogous statements for the Assouad dimension and the Assouad spectrum.

\begin{corollary}\label{cor:main1}
The estimates for quasiconformal distortion of the Assouad dimension and spectrum can be sharpened to match those in \eqref{eq:lambda-estimates}. Specifically, the coefficients ${1-n/{p_O^{\RH}(n,K)}}$ and $(1-n/{p_I^{\RH}(n,K)})^{-1}$ can be replaced by $(K_O(f)\lambda(n))^{-1}$ and $\lambda(n) K_I(f)$ respectively, in inequalities \eqref{eq:impmain1} and \eqref{eq:impmain2}.
\end{corollary}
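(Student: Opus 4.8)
The plan is to revisit the proofs of Theorems \ref{th:main1} and \ref{th:main2} (in the strengthened form recorded in the preceding Remark) and to observe that the higher integrability exponents enter those arguments at exactly one point: in Lemma \ref{Le2}, where one combines the Morrey--Sobolev inequality with the reverse H\"older inequality \eqref{eq:pnK} for a \emph{single} exponent $p$. When the argument is run for $f$ itself, the only requirement is $p \in (n, p_O^{\RH}(n,K_O(f)))$, and the outcome is the bound $\frac{1}{\dim_A f(E)} - \frac1n \ge (1 - \tfrac np)(\frac1{\dim_A E} - \frac1n)$ --- and, for the regularized spectrum, the analogous inequality in \eqref{eq:impmain2} --- valid for every such $p$; letting $p$ increase to the threshold recovers the left-hand (lower) estimates. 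When the argument is run for $g = f^{-1}$, which maps $f(E)$ to $E$, the only requirement is $p \in (n, p_O^{\RH}(n,K_O(g)))$, and since $K_O(g) = K_I(f)$ this reads $p \in (n, p_O^{\RH}(n,K_I(f)))$; this produces the right-hand (upper) estimates, with coefficient $(1-\tfrac np)^{-1}$, again after optimizing over $p$. The Assouad-dimension statement \eqref{eq:impmain1} and the regularized-spectrum statement \eqref{eq:impmain2} are handled in the same way, since Lemma \ref{Le2} is the sole appearance of the exponents in either proof.

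The second step is to insert \eqref{eq:pnK-lower-bound2}. Applied with $K = K_O(f)$ it gives $p_O^{\RH}(n,K_O(f)) \ge n\lambda(n)K_O(f)/(\lambda(n)K_O(f)-1)$, so $p$ may be taken arbitrarily close to this value in the lower estimates; since $1-\tfrac np \to (\lambda(n)K_O(f))^{-1}$ in that limit, the coefficient $1-n/p_O^{\RH}(n,K)$ is replaced by $(K_O(f)\lambda(n))^{-1}$. Applied with $K = K_I(f) = K_O(g)$ it gives $p_O^{\RH}(n,K_I(f)) \ge n\lambda(n)K_I(f)/(\lambda(n)K_I(f)-1)$, so running the argument for $g$ with $p$ tending to this value turns the upper coefficient into $\lambda(n)K_I(f)$. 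These replacements are legitimate; because \eqref{eq:pnK-lower-bound2} is only a one-sided estimate the resulting inequalities are formally weaker than the ones involving $p_O^{\RH}$ and $p_I^{\RH}$, but they are explicit and they put the conclusion in precisely the shape of \eqref{eq:lambda-estimates}.

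I do not expect any real obstacle here: granting \eqref{eq:pnK-lower-bound2}, the corollary is essentially a substitution, and \eqref{eq:pnK-lower-bound2} itself --- that a closer reading of the proof of \cite[Theorem 14.4.1]{im:gft} yields the reverse H\"older inequality \eqref{eq:pnK} (and not merely $W^{1,p}_\loc$ membership) for exponents up to $n\lambda(n)K/(\lambda(n)K-1)$ --- is asserted in the preceding Remark and is what I would take as given. The one point requiring care is the bookkeeping of dilatations: the upper estimate must be derived by applying the lower-estimate argument to $g = f^{-1}$ and then invoking \eqref{eq:pnK-lower-bound2} with the \emph{outer} dilatation $K_O(g) = K_I(f)$ of $g$; it is this, rather than any bound on $p_I^{\RH}(n,K_O(f))$, that produces the clean coefficient $\lambda(n)K_I(f)$. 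When $n=2$ this subtlety disappears, as $K_O(f) = K_I(f)$.
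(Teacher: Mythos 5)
Your proposal is correct and follows essentially the same route as the paper: the left-hand estimate is obtained by substituting the lower bound \eqref{eq:pnK-lower-bound2} for $p_O^{\RH}(n,K_O(f))$, and the right-hand estimate by applying that argument to $g=f^{-1}$, whose outer dilatation is $K_I(f)$. Your remark about the dilatation bookkeeping is exactly the point the paper relies on, so nothing further is needed.
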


\begin{proof}
The left hand inequality follows immediately from \eqref{eq:pnK-lower-bound2}. The right hand inequality follows by applying the left hand inequality to the inverse of $f$.
\end{proof}

\section{Concluding remarks and open questions}\label{sec:questions}

\begin{remark}
The upper bound for Assouad dimension in Theorem \ref{th:main1} has the same form as in the analogous theorem for Hausdorff dimension. The upper bound depends only on the dimension of the source set $E$, on $n$, and on the optimal exponent of higher Sobolev regularity. This naturally leads us to pose the following question.
\end{remark}

\begin{question}\label{q:1}
What can be said about upper bounds for distortion of Assouad dimension or the Assouad spectrum under (not necessarily quasiconformal) maps $f \in W^{1,p}(\R^n:\R^N)$, $p>n$?
\end{question}

The analogous question for Hausdorff and box-counting dimensions was considered by Kaufman \cite{Kaufman} for sets $E \subset \R^n$, and by Balogh, Monti and the second author in \cite{bmt:frequency} for generic elements in parameterized families of subsets of $\R^n$.

In particular, Question \ref{q:1} remains not completely resolved even if we {\bf do} assume that $f$ is quasiconformal. Of course, in that situation the estimates in \eqref{eq:main1} and \eqref{eq:main2} provide upper (resp.\ lower) bounds for the dimension of $f(E)$ in terms of the dimension $\alpha_0$ of $E$ and the optimal Sobolev regularity exponents $p_O(n,K)$ (resp.\ $p_I(n,K)$). However, what is still not known is whether better estimates hold if $f$ has greater Sobolev regularity than that dictated by the universal exponents $p_O(n,K)$ and $p_I(n,K)$ associated to the dilatation of $f$. One may ask whether the usual upper bound $p\alpha_0/(p-n+\alpha_0)$ is still valid if $f$ is merely assumed to be a quasiconformal map in the local Sobolev space $W^{1,p}_\loc$. In particular, if $f:\R^n \to \R^n$ is quasiconformal and Lipschitz, must it be the case that $\dim_A f(E) \le \dim_A(E)$ for all sets $E \subset \R^n$? Recall (as previously observed) that Lipschitz mappings can in general raise the Assouad dimension of sets.

Another natural question which arises is the following.

\begin{question}
Give quantitative upper and lower bounds for distortion of Assouad dimension under quasisymmetric maps of $\R$.
\end{question}

It is known that if $f:\R\to\R$ is $\eta$-quasisymmetric and $E \subset \R$ has $\dim_A(E) = \alpha \in (0,1)$, then $\dim_A(f(E)) \le \beta = \beta(\alpha,\eta)<1$. This follows from the quantitative equivalence of porosity with non-full Assouad dimension (valid in any $\R^n$) \cite[Theorem 5.2]{Luukkainen} and the quantitative quasisymmetric invariance of porosity \cite{vai:porosity}. The exact formula for $\beta(\alpha,\eta)$ stemming from this argument is complicated and unlikely to be sharp. It is well-known that the analogous statement for Hausdorff dimension is false. Indeed, there exist quasisymmetric maps $f:\R\to\R$ and subsets $E \subset \R$ for which both $E$ and $\R \setminus f(E)$ have Hausdorff dimension as small as we please. See, for example, \cite{tuk:qcgroup}.

\begin{remark}
Another consequence of Theorem \ref{th:main1} is that Assouad dimension of compact sets is invariant under conformal mappings. In dimensions three and higher, this statement provides no new information, due to Liouville's theorem and the M\"obius invariance of Assouad dimension \cite[Theorem A.10]{Luukkainen}. However, it is a new result for planar conformal maps.
\end{remark}

\begin{example}
The restriction to compact sets in the previous remark is necessary. Let $\Omega$ be the set of $z \in \C$ so that $\Real(z)>0$ and $|\Imag(z)|<\pi$. Let $f(z) = \exp(-z)$. Let $E = \N \subset \Omega$. Then $\dim_A(E) = 1$, but $\dim_A(f(E)) = 0$.
\end{example}

Dilatation-independent results for these notions of dimension may also be of interest. The {\it global quasiconformal dimension} of a set $E \subset \R^n$ is the infimum of dimensions of images $f(E)$, where the infimum is taken over all quasiconformal self-maps of $\R^n$. For values $\alpha \in [1,n)$ there exist Ahlfors $\alpha$-regular sets $E \subset \R^n$ which are minimal for quasiconformal dimension distortion. Note that all notions of dimension considered in this paper (Hausdorff, box-counting, Assouad, and the Assouad spectrum) agree for an Ahlfors regular set. Thus for any $0<\theta<1$ and any $1\le \alpha <n$ there exists a set $E \subset \R^n$ with $\dim_A^\theta(E) = \alpha$ and which is minimal for global quasiconformal Assouad spectrum dimension with parameter $\theta$. It is known that sets of Hausdorff (respectively, Assouad) dimension strictly less than one have global quasiconformal Hausdorff (respectively, Assouad) dimension zero; these results can be found in \cite{kov:confdim} and (respectively) \cite{tys:assouad}.

\begin{conjecture}\label{conj:gqcd}
Let $0\le \theta < 1$ and let $E \subset \R^n$ satisfy $\dim_A^\theta(E)<1$. Then the global quasiconformal Assouad spectrum dimension of $E$ with parameter $\theta$ is equal to zero. Here we interpret $\dim_A^0$ to be the upper box-counting dimension $\ovdimB$.
\end{conjecture}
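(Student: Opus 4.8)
\begin{remark}
Toward Conjecture \ref{conj:gqcd}, the natural plan is to interpolate between the two known endpoint cases, namely that $\dim_H(E)<1$ forces the global quasiconformal Hausdorff dimension of $E$ to be zero \cite{kov:confdim}, and that $\dim_A(E)<1$ forces the global quasiconformal Assouad dimension of $E$ to be zero \cite{tys:assouad}.

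First I would reduce to the case of a compact set $E$ contained in a small cube, exactly as in the reductions that open the proofs of Theorems \ref{th:main1} and \ref{th:main2}: the Assouad spectrum is M\"obius invariant (bi-Lipschitz invariance together with its behaviour under an inversion centred in the complement of $E$), and a concluding homothety rescales. Next I would recast the hypothesis $\dim_A^\theta(E)<1$ as a \emph{uniform porosity at paired scales}: there exist $s<1$ and $C>0$ so that for every $x\in E$ and every $R<1$, writing $r=R^{1/\theta}$, the set $B(x,R)\cap E$ is covered by at most $C(R/r)^s$ sets of diameter $r$, hence meets only a vanishing fraction of a grid of $r$-cubes inside $B(x,R)$. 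The heart of the argument would then be a construction: build quasiconformal self-maps $f_j$ of $\R^n$ as locally finite compositions of elementary radial-stretch deformations, each centred at a point of $E$ and supported on cubes of a Whitney decomposition of $\R^n\setminus E$ whose sizes are restricted to a dyadic window $[R^{1/\theta},R]$, arranged so that $f_j$ pushes $E$ into these gaps. One then checks that the covering number of $f_j(E)$ at the paired scales $(R,R^{1/\theta})$ is reduced by a definite multiplicative factor; iterating and passing to a quasiconformal limit $f$ drives $\dim_A^\theta(f(E))$ to zero. The endpoint $\theta=0$, where the porosity holds only on average over scales, I would treat either by a separate argument or by a diagonal passage $\theta\to 0^+$.

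The hard part will be the construction itself. One must simultaneously (i) keep each $f_j$ quasiconformal and the infinite composition convergent to a homeomorphism of $\R^n$ (dilatations may grow, but with summable control), and (ii) ensure that the compression carried out at the scale pairs $(R,R^{1/\theta})$ does not manufacture new thickness at other scale pairs, which would spoil the spectrum at a different parameter. The Assouad-dimension construction of \cite{tys:assouad} can work one worst scale pair at a time precisely because porosity is available at \emph{all} scales; the Hausdorff-dimension construction of \cite{kov:confdim} is permitted to lose control on an exceptional set of scales of measure zero; the Assouad spectrum sits between these and appears to demand a construction that is at once global, like the former, and tuned to the exact relation $R=r^\theta$, unlike either. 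Reconciling these two demands is, in my view, the crux of the problem.
\end{remark}
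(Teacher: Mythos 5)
This statement is a conjecture, and the paper does not prove it; what the paper does supply is a reduction showing that the full conjecture follows from its single endpoint case $\theta=0$. Your proposal misses that reduction, and in fact gets its logical structure backwards. The paper's observation is this: if $\dim_A^\theta(E)<1$ for some $\theta$, then $\ovdimB(E)\le\dim_A^\theta(E)<1$; if one could show (the $\theta=0$ case) that quasiconformal self-maps $f$ of $\R^n$ can make $\ovdimB(f(E))$ arbitrarily small, then the a priori bound $\dim_{A}^\theta(f(E))\le\ovdimB(f(E))/(1-\theta)$ from \eqref{eq:fraser-bound} (Proposition \ref{prop:Assouad-spectrum-facts}(5)) makes $\dim_A^\theta(f(E))$ arbitrarily small as well, with no construction tuned to the relation $R=r^\theta$ needed. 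So the entire difficulty is concentrated in the box-counting case, and the positive-$\theta$ cases come for free.

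Your plan inverts this: you propose an elaborate scale-paired compression for each fixed $\theta>0$ (which \eqref{eq:fraser-bound} renders unnecessary) and relegate $\theta=0$ to ``a separate argument or a diagonal passage $\theta\to 0^+$.'' The diagonal passage cannot work in that direction: for each $\theta>0$ you would obtain a map $f_\theta$ depending on $\theta$, and there is no single limiting quasiconformal map controlling $\ovdimB(f(E))$; moreover $\ovdimB$ is the \emph{infimum} over the spectrum, so smallness of $\dim_A^\theta$ for small $\theta$ is the conclusion you want, not a tool for reaching $\theta=0$. Your worry that compression at the scale pairs $(R,R^{1/\theta})$ might ``manufacture new thickness at other scale pairs'' is also defused by the reduction: one only ever needs to control box-counting dimension, for which no pairing of scales is relevant. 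The genuinely open problem --- and the one worth your construction effort --- is whether $\ovdimB(E)<1$ forces the global quasiconformal box-counting dimension of $E$ to be zero, in analogy with the known Hausdorff and Assouad results you cite.
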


To prove Conjecture \ref{conj:gqcd} it suffices to establish the case $\theta=0$, i.e., to prove the result for upper box-counting dimension. This follows from known estimates for Assouad spectrum values; see Proposition \ref{prop:Assouad-spectrum-facts}(5). Assume that Conjecture \ref{conj:gqcd} has been established for $\theta=0$. Let $E$ and $0<\theta<1$ be such that $\dim_A^\theta(E)<1$. Then $\ovdimB(E)<1$ and hence there exist quasiconformal maps $f$ of $\R^n$ for which $\ovdimB(f(E))$ is arbitrarily small. Inequality \eqref{eq:fraser-bound} then implies that $\dim_A^\theta(f(E))$ can also be made arbitrarily small by varying over all quasiconformal self-maps $f$ of $\R^n$.

\bibliographystyle{acm}
\bibliography{QC-distortion-Assouad-dimension-revision}

\begin{thebibliography}{10}

\bibitem{Assouad83}
{\sc Assouad, P.}
\newblock Plongements lipschitziens dans {${\bf R}^{n}$}.
\newblock {\em Bull. Soc. Math. France 111}, 4 (1983), 429--448.

\bibitem{Astala}
{\sc Astala, K.}
\newblock Area distortion of quasiconformal mappings.
\newblock {\em Acta Math. 173}, 1 (1994), 37--60.

\bibitem{aim:2qc}
{\sc Astala, K., Iwaniec, T., and Martin, G.}
\newblock {\em Elliptic partial differential equations and quasiconformal
  mappings in the plane}, vol.~48 of {\em Princeton Mathematical Series}.
\newblock Princeton University Press, Princeton, NJ, 2009.

\bibitem{bmt:frequency}
{\sc Balogh, Z.~M., Monti, R., and Tyson, J.~T.}
\newblock Frequency of {S}obolev and quasiconformal dimension distortion.
\newblock {\em J. Math.\ Pures Appl.\ (9) 99}, 2 (2013), 125--149.

\bibitem{btw:parallel-lines}
{\sc Balogh, Z.~M., Tyson, J.~T., and Wildrick, K.}
\newblock Quasiconformal mappings that highly distort the dimensions of many
  parallel lines.
\newblock {\em Ann.\ Acad.\ Sci.\ Fenn.\ Ser.\ A I Math. 42\/} (2017), 61--72.

\bibitem{bis:increase}
{\sc Bishop, C.~J.}
\newblock Quasiconformal mappings which increase dimension.
\newblock {\em Ann.\ Acad.\ Sci.\ Fenn.\ Math. 24}, 2 (1999), 397--407.

\bibitem{bhw:quasiconformal}
{\sc Bishop, C.~J., Hakobyan, H., and Williams, M.}
\newblock Quasisymmetric dimension distortion of {A}hlfors regular subsets of a
  metric space.
\newblock {\em Geom.\ Funct.\ Anal. 26}, 2 (2016), 379--421.

\bibitem{Fraser2020}
{\sc Fraser, J.}
\newblock {\em Assouad dimension and fractal geometry}, vol.~222 of {\em
  Cambridge Trats in Mathematics}.
\newblock Cambridge University Press, 2020.

\bibitem{fr:spirals}
{\sc Fraser, J.~M.}
\newblock On {H}\"{o}lder solutions to the spiral winding problem.
\newblock {\em Nonlinearity 34}, 5 (2021), 3251--3270.

\bibitem{fhhty:two}
{\sc Fraser, J.~M., Hare, K.~E., Hare, K.~G., Troscheit, S., and Yu, H.}
\newblock The {A}ssouad spectrum and the quasi-{A}ssouad dimension: a tale of
  two spectra.
\newblock {\em Ann.\ Acad.\ Sci.\ Fenn.\ A I Math. 44\/} (2019), 379--387.

\bibitem{fy:assouad-spectrum}
{\sc Fraser, J.~M., and Yu, H.}
\newblock New dimension spectra: finer information on scaling and homogeneity.
\newblock {\em Adv.\ Math. 329\/} (2018), 273--328.

\bibitem{Gehring73}
{\sc Gehring, F.~W.}
\newblock The {$L^{p}$}-integrability of the partial derivatives of a
  quasiconformal mapping.
\newblock {\em Acta Math. 130\/} (1973), 265--277.

\bibitem{GV}
{\sc Gehring, F.~W., and V{\"a}is{\"a}l{\"a}, J.}
\newblock Hausdorff dimension and quasiconformal mappings.
\newblock {\em J. London Math.\ Soc.\ (2) 6\/} (1973), 504--512.

\bibitem{hei:lectures}
{\sc Heinonen, J.}
\newblock {\em Lectures on analysis on metric spaces}.
\newblock Universitext. Springer-Verlag, New York, 2001.

\bibitem{im:gft}
{\sc Iwaniec, T., and Martin, G.}
\newblock {\em Geometric function theory and non-linear analysis}.
\newblock Oxford Mathematical Monographs. The Clarendon Press Oxford University
  Press, New York, 2001.

\bibitem{Kaufman}
{\sc Kaufman, R.~P.}
\newblock Sobolev spaces, dimension, and random series.
\newblock {\em Proc. Amer. Math. Soc. 128}, 2 (2000), 427--431.

\bibitem{kov:confdim}
{\sc Kovalev, L.~V.}
\newblock Conformal dimension does not assume values between zero and one.
\newblock {\em Duke Math.\ J. 134}, 1 (2006), 1--13.

\bibitem{Luukkainen}
{\sc Luukkainen, J.}
\newblock Assouad dimension: antifractal metrization, porous sets, and
  homogeneous measures.
\newblock {\em J. Korean Math.\ Soc. 35}, 1 (1998), 23--76.

\bibitem{ste:singular}
{\sc Stein, E.~M.}
\newblock {\em Singular integrals and differentiability properties of
  functions}, vol.~30 of {\em Princeton Mathematical Series}.
\newblock Princeton University Press, Princeton, N.J., 1970.

\bibitem{tuk:qcgroup}
{\sc Tukia, P.}
\newblock A quasiconformal group not isomorphic to a {M}\"obius group.
\newblock {\em Ann.\ Acad.\ Sci.\ Fenn.\ Ser.\ A I Math. 6\/} (1981), 149--160.

\bibitem{tys:hausdorff}
{\sc Tyson, J.~T.}
\newblock Sets of minimal {H}ausdorff dimension for quasiconformal maps.
\newblock {\em Proc.\ Amer.\ Math.\ Soc. 128}, 11 (2000), 3361--3367.

\bibitem{tys:assouad}
{\sc Tyson, J.~T.}
\newblock Lowering the {A}ssouad dimension by quasisymmetric maps.
\newblock {\em Illinois J. Math. 45\/} (2001), 641--656.

\bibitem{tw:gasket}
{\sc Tyson, J.~T., and Wu, J.-M.}
\newblock Quasiconformal dimensions of self-similar fractals.
\newblock {\em Rev.\ Mat.\ Iberoam. 22}, 1 (2006), 205--258.

\bibitem{vai:porosity}
{\sc V{\"a}is{\"a}l{\"a}, J.}
\newblock Porous sets and quasisymmetric maps.
\newblock {\em Trans.\ Amer.\ Math.\ Soc. 299\/} (1987), 525--533.

\end{thebibliography}
\end{document}